\newtheorem{cor}{Corollary}[section]
\newtheorem{pr}[cor]{Problem}
\newtheorem{te}[cor]{Theorem}
\newtheorem{p}[cor]{Proposition}
\newtheorem{qu}[cor]{Question}
\newtheorem{lemma}[cor]{Lemma}
\newtheorem{conj}[cor]{Conjecture}
\theoremstyle{definition}
\newtheorem{de}[cor]{Definition}
\theoremstyle{remark}
\newtheorem{rem}[cor]{Remark}
\newtheorem{ex}[cor]{Example}
\newtheorem{ob}[cor]{Observation}
\newcommand{\cz}{\mathbb{C}}
\newcommand{\nz}{\mathbb{N}}
\newcommand{\zz}{\mathbb{Z}}
\newcommand{\ff}{\mathbb{F}}
\newcommand{\pp}{\mathcal{P}}
\def\acts{\curvearrowright}
\newcommand{\vp}{\varphi}
\newcommand{\ve}{\varepsilon}
\providecommand{\Z}{\mathbb{Z}}
\def\tilde{\widetilde}
\newcommand{\s}[1]{S_{#1}}
\begin{document}
\title{Constraint stability in permutations and action traces}

\author{Goulnara Arzhantseva}
\address{Universit\"at Wien, Fakult\"at f\"ur Mathematik\\
Oskar-Morgenstern-Platz 1, 1090 Wien, Austria.}
\email{goulnara.arzhantseva@univie.ac.at}

\author{Liviu P\u aunescu}
\address{Institute of Mathematics of the Romanian Academy, 21 Calea Grivitei Street, 010702 Bucharest, Romania}
\email{liviu.paunescu@imar.ro}
\date{}
\subjclass[2010]{20Fxx, 20F05, 20F69, 20B30, 22F10}
\keywords{Metric ultraproducts, sofic groups, Loeb measure space, groups stable in permutations.}

\thanks{L.P. was supported by grant number PN-II-RU-TE-2014-4-0669 of the Romanian National Authority for Scientific Research, CNCS - UEFISCDI} 
\baselineskip=16pt

\begin{abstract}
An action trace is a function naturally associated to a probability measure preserving action of a group on a standard probability space. For countable amenable groups,  we characterise stability in permutations using action traces. We extend such a characterisation to constraint stability.  We give sufficient conditions for a group to be constraint stable. As an application, we obtain many new examples of groups stable in permutations, in particular, among free amalgamated products over a finite group. This is the first general result  (besides trivial case of free products)  which gives a wealth of non-amenable groups stable in permutations.

\end{abstract}
\maketitle

\section{Introduction}
Let $\s{n}$ be the symmetric group on the set $[n]=\{1, \ldots, n\}$ and $1_n$ denote the identity element. The \emph{normalised Hamming distance} is defined, for two permutations $p,q\in \s{n}$, by
$$d_H(p,q)=\frac1nCard\left\{i:p(i)\neq q(i)\right\}.$$

Let us consider the commutator relator
$
xyx^{-1}y^{-1}=1
$
as an equation in  $\s{n}$. A \emph{solution} of this commutator equation  is given by two permutations $p, q\in \s{n}$ which commute: $$pqp^{-1}q^{-1}=1_n.$$  If we fix the value of one of the variables in the commutator relator, that is, if we impose a \emph{constraint} to this equation, prescribing $x=a$ for a fixed  $a\in \s{n}$, then a solution of the centralizer equation $aya^{-1}y^{-1}=1$ \emph{with coefficient} $a$, is given by a permutation $q\in \s{n}$ which centralizes $a$: $$aqa^{-1}q^{-1}=1_n.$$ An \emph{almost} solution is when the above equalities to $1_n$ are relaxed to be almost equalities,  with respect to $d_H$. For example, a \emph{$\delta$-solution} of the centralizer equation  with coefficient $a$, for some $\delta>0$, is a permutation $q\in \s{n}$ such that $$d_H(aqa^{-1}q^{-1}, 1_n)<\delta.$$ 

The \emph{constraint stability in permutations} is the phenomenon when every almost solution of an equation with coefficients is near (with respect to $d_H$, uniformly independently of $n$) to a solution.   For example, the constraint stability of the centralizer equation means that every permutation which almost centralizes $a$ is $d_H$-close to a centralizing  permutation.  

A general theory of constraint metric approximations by an arbitrary approximating family endowed with a bi-invariant distance (not necessarily by permutations with $d_H$) and of constraint stability of arbitrary systems of group equations has been developed in our prior article~\cite{Ar-Pa3}.

In the present paper, we introduce the notion of \emph{action trace}.  Equipped with this tool, we extend our study of constraint stability and provide new examples of groups stable in permutations with respect to $d_H$. The following result gives a general ground for our examples, see Definition~\ref{de:clift} for the terminology.

\begin{te}[Theorem \ref{th:amalgamation}]
Let $G_1$ and $G_2$ be two countable groups with a common subgroup $H$. Suppose that $G_1$ is stable in permutations and $G_2$ is $\vp$-constraint stable, for every homomorphism $\vp\colon H\to\Pi_k\s{n_k}$. Then $G_1*_HG_2$ is stable in permutations.
\end{te}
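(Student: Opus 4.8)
The plan is to work with the reformulation of stability in permutations as a lifting property: a countable group $G$ is stable in permutations exactly when, for every index sequence $(n_k)$ and every ultrafilter, every homomorphism $\Phi\colon G\to\Pi_k\s{n_k}$ into the metric ultraproduct admits a lift $\tilde\Phi\colon G\to\prod_k\s{n_k}$ to the full product satisfying $\pi\circ\tilde\Phi=\Phi$, where $\pi\colon\prod_k\s{n_k}\to\Pi_k\s{n_k}$ is the quotient map. So I would fix one such ultraproduct together with a homomorphism $\Phi\colon G_1*_HG_2\to\Pi_k\s{n_k}$ and try to produce a lift.

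First I would restrict to the first factor, setting $\Phi_1=\Phi|_{G_1}\colon G_1\to\Pi_k\s{n_k}$. Since $G_1$ is stable in permutations, $\Phi_1$ lifts to a genuine homomorphism $\tilde\Phi_1\colon G_1\to\prod_k\s{n_k}$. Its restriction to the amalgamated subgroup is a concrete coefficient homomorphism $\hat\vp:=\tilde\Phi_1|_H\colon H\to\prod_k\s{n_k}$, and by construction $\hat\vp$ is a lift of $\vp:=\Phi|_H\colon H\to\Pi_k\s{n_k}$.

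Next I would restrict to the second factor, $\Phi_2=\Phi|_{G_2}\colon G_2\to\Pi_k\s{n_k}$, noting that $\Phi_2|_H=\vp$. Here the hypothesis enters: because $G_2$ is $\vp$-constraint stable and $\vp$ is a homomorphism $H\to\Pi_k\s{n_k}$, I may lift $\Phi_2$ while prescribing its values on $H$ to be exactly the coefficient $\hat\vp$. This yields $\tilde\Phi_2\colon G_2\to\prod_k\s{n_k}$ with $\pi\circ\tilde\Phi_2=\Phi_2$ and $\tilde\Phi_2|_H=\hat\vp=\tilde\Phi_1|_H$. Now the two honest lifts agree on $H$ as maps into the product, so the universal property of the amalgamated free product glues them into a single homomorphism $\tilde\Phi\colon G_1*_HG_2\to\prod_k\s{n_k}$; since $\pi\circ\tilde\Phi$ agrees with $\Phi$ on each factor and the factors generate $G_1*_HG_2$, we get $\pi\circ\tilde\Phi=\Phi$. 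Hence $\Phi$ lifts and $G_1*_HG_2$ is stable in permutations.

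The decisive point, and where the definitions must be deployed with care (Definition~\ref{de:clift}), is the third step: constraint stability has to let us fix the coefficient on $H$ to be precisely $\hat\vp=\tilde\Phi_1|_H$, rather than merely returning some lift of $\vp$. If one only knew that $\tilde\Phi_2|_H$ were an arbitrary lift of $\vp$, then $\tilde\Phi_1|_H$ and $\tilde\Phi_2|_H$ would be two possibly distinct lifts of the same ultraproduct element and the gluing would fail; repairing this would require conjugating one lift into the other within the kernel of $\pi$, which is exactly the difficulty that $\vp$-constraint stability is designed to bypass. The remaining items are bookkeeping: that the lift $\tilde\Phi_1$ furnished by stability of $G_1$ yields a coefficient homomorphism of the form covered by the hypothesis, and that all maps live over the same index sequence $(n_k)$ and ultrafilter, so that $\tilde\Phi_1$ and $\tilde\Phi_2$ indeed take values in the common product $\prod_k\s{n_k}$.
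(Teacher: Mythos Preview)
Your argument is correct and is essentially the paper's own proof: lift on $G_1$ by stability, take the restriction to $H$ as the constraint datum $\hat\vp\colon H\to\prod_k\s{n_k}$, apply $\hat\vp$-constraint stability of $G_2$ to lift on $G_2$ with matching values on $H$, then glue via the universal property. One small notational slip: in the sentence invoking the hypothesis you write ``$G_2$ is $\vp$-constraint stable and $\vp$ is a homomorphism $H\to\Pi_k\s{n_k}$'', but the constraint you actually need (and use) is $\hat\vp$, which lands in the full product $\prod_k\s{n_k}$ as Definition~\ref{de:clift} requires; your final paragraph shows you understand this, so just clean up the wording.
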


In the process, we generalise a few classical results, our conceptual results on stability of groups from~\cite{Ar-Pa2} and results on stability of amenable groups from~\cite{BLT} (precise references are given below). The study of constraint stability initiated in~\cite{Ar-Pa3}  is more general than that of stability as considered in~\cites{Ar-Pa2, BLT}. The action traces are well-suited to this more general setting and allow to interpret the use of invariant random subgroups from~\cite{BLT} in finitary terms. The next theorem is our main technical result, see Definition~\ref{def:At} and Definition~\ref{def:caT} for the terminology.

\begin{te}[Theorem \ref{te:constraint stable}]
Let $H\leqslant G$ be countable groups, $G$ amenable and $H$ finite. Let $\vp\colon H\to\Pi_k\s{{n_k}}$ be a homomorphism. Then $G$ is $\vp$-constraint stable if and only if every $\vp$-constraint action trace is $\vp$-constraint residually finite.
\end{te}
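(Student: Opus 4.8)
The plan is to translate the statement into the language of metric ultraproducts and probability measure preserving actions, to prove the two implications separately, and to reserve the amenability-driven transfer argument for the nontrivial direction.

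First I would fix a free ultrafilter $\omega$ and form the metric ultraproduct $\mathcal{S}_\omega=\prod_{k\to\omega}(\s{n_k},d_H)$, which acts on the standard Loeb probability space $(X,\mu)=\prod_{k\to\omega}[n_k]$ (with the uniform measures) by the ultraproduct of the coordinate permutation actions. Writing $\mathrm{tr}$ for the normalised number of fixed points, one has $\mu(\mathrm{Fix}(s))=\lim_\omega\mathrm{tr}(s_k)$ for $s=[s_k]\in\mathcal{S}_\omega$, so the action trace of a homomorphism $\pi\colon G\to\mathcal{S}_\omega$ is the fixed-point function $g\mapsto\mu(\mathrm{Fix}(\pi(g)))$. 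A $\vp$-almost homomorphism $\psi=(\psi_k)\colon G\to\Pi_k\s{n_k}$ extending $\vp$ induces a genuine homomorphism $\hat\psi\colon G\to\mathcal{S}_\omega$ whose restriction to $H$ is the homomorphism $\hat\vp\colon H\to\mathcal{S}_\omega$ induced by $\vp$, and conversely such homomorphisms into $\mathcal{S}_\omega$ are represented by $\vp$-almost homomorphisms; under this dictionary the $\vp$-constraint action traces of Definition~\ref{def:caT} are precisely the action traces of the homomorphisms $G\to\mathcal{S}_\omega$ extending $\hat\vp$. I would then recall, as in \cite{Ar-Pa2} adapted to the constraint framework of \cite{Ar-Pa3}, that $\vp$-constraint stability is equivalent to the assertion that every homomorphism $G\to\mathcal{S}_\omega$ extending $\hat\vp$ lifts, along the quotient $\Pi_k\s{n_k}\to\mathcal{S}_\omega$, to a homomorphism into $\Pi_k\s{n_k}$ still extending $\vp$; equivalently, to a sequence of genuine homomorphisms $\rho_k\colon G\to\s{n_k}$ extending $\vp_k$ with $\lim_\omega d_H(\rho_k(g),\psi_k(g))=0$ for all $g$.

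For the forward implication, assume $G$ is $\vp$-constraint stable and let $\tau$ be a $\vp$-constraint action trace, realised by a homomorphism $\pi=\hat\psi\colon G\to\mathcal{S}_\omega$ extending $\hat\vp$. The lifting reformulation yields genuine homomorphisms $\rho_k\colon G\to\s{n_k}$ extending $\vp_k$ with $\lim_\omega d_H(\rho_k(g),\psi_k(g))=0$. Since $|\mathrm{tr}(p)-\mathrm{tr}(q)|\le d_H(p,q)$, the traces agree in the limit, $\tau(g)=\lim_\omega\mathrm{tr}(\rho_k(g))$, so $\tau$ is approximated along $\omega$ by the traces of the finite permutation actions $\rho_k$, each of which extends $\vp_k$. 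This is exactly the statement that $\tau$ is $\vp$-constraint residually finite.

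The backward implication is the heart of the argument and the step I expect to be the main obstacle. Assuming every $\vp$-constraint action trace is $\vp$-constraint residually finite, I would fix an arbitrary homomorphism $\pi\colon G\to\mathcal{S}_\omega$ extending $\hat\vp$, represented by a $\vp$-almost homomorphism $\psi=(\psi_k)$, and aim to produce the lift demanded by the reformulation. By hypothesis the action trace $\tau_\pi$ is $\vp$-constraint residually finite, so there are genuine finite permutation actions of $G$ extending $\vp$ whose traces approximate $\tau_\pi$. The task is to upgrade this approximation of a single conjugation-invariant function to an approximation of the maps themselves, that is, to manufacture, for $\omega$-almost every $k$, a genuine homomorphism $\rho_k\colon G\to\s{n_k}$ extending $\vp_k$ with $d_H(\rho_k(g),\psi_k(g))$ small for every $g$. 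Here amenability enters through an Ornstein--Weiss/Rokhlin tiling: the $\pi$-action on $(X,\mu)$ admits an almost-everywhere tiling by approximately invariant copies of large F{\o}lner sets of $G$, which reflects down to an approximate F{\o}lner tiling of $[n_k]$ under $\psi_k$ for $\omega$-most $k$; on each tile one splices in the genuine local structure coming from the residually finite model with the matching trace statistics, and reassembles a genuine $G$-action on $[n_k]$ that moves almost no point differently from $\psi_k$. The finiteness of $H$ is used precisely here: since the $\vp_k$-orbits are of bounded size, they can be carried along intact through the surgery, so that the reconstructed homomorphism still restricts to $\vp_k$ on $H$. The two delicate points, and the reason the result is not automatic, are the uniform control of the tiling boundary error across all $g$ simultaneously and---most importantly---the verification that matching the action trace, rather than the full invariant random subgroup used in \cite{BLT}, already supplies enough statistical information to perform the splicing when $G$ is amenable; this economy is exactly what the action trace formalism is designed to deliver.
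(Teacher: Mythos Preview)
Your forward direction is essentially correct but slips on two points. First, the action trace is a function on $\mathcal{P}_f(G)$, not on $G$: it records $\mu\bigl(\bigcap_{g\in A}\mathrm{Fix}(\pi(g))\bigr)$ for every finite $A\subseteq G$, and Example~\ref{ex:Trg} shows that the singleton values do not suffice to determine conjugacy of sofic morphisms. Second, your claim that the $\vp$-constraint action traces are \emph{precisely} the traces of homomorphisms $G\to\mathcal{S}_\omega$ extending $\hat\vp$ is not definitional: a $\vp$-constraint action trace is by definition the trace of some p.m.p.\ action whose restriction to $\mathcal{P}_f(H)$ matches $Tr_\vp$, and realising it inside the ultraproduct with the correct restriction to $H$ is the content of Proposition~\ref{c-amenbale-sofic} (which in turn rests on Proposition~\ref{amenable-sofic} and Theorem~\ref{thm:generalised Elek}). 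Once that is supplied, your forward argument is that of Corollary~\ref{cor:cstab}.

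For the backward direction you take a genuinely different route from the paper. The paper performs no Ornstein--Weiss tiling or splicing on $[n_k]$; instead it factors the problem into two black boxes. Given the $\vp$-constraint residually finite witness $\Phi\colon G\to\Pi_k\s{n_k}$ with $\Phi|_H=\vp$ and $Tr_\Phi=Tr_\theta$, Theorem~\ref{thm:generalised Elek} (the amenable conjugacy theorem, proved via Schramm hyperfiniteness and the Newman--Sohler statistical-distance criterion rather than by a direct tiling) provides $p$ with $Q\circ(p\Phi p^{-1})=\theta$. This $p\Phi p^{-1}$ is already a lift of $\theta$, but its restriction to $H$ is only conjugate to $\vp$ and equal to it in the quotient, not equal on the nose. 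Finiteness of $H$ enters solely through Proposition~\ref{prop:finite conjugated}: two conjugate homomorphisms of a \emph{finite} group into $\Pi_k\s{n_k}$ that agree in the quotient are conjugate by an element trivial in the quotient, giving a correction $q$ with $Q(q)=1_\omega$ and $(qp)\Phi(qp)^{-1}|_H=\vp$ exactly. Your direct surgery plan is plausible in outline, but it leaves unresolved the two delicate points you yourself flag, and it locates the use of finiteness of $H$ in a vaguer place (bounded orbit size during splicing) than the paper's clean separation of the amenability input (conjugacy in the ultraproduct) from the finiteness-of-$H$ input (exact constraint adjustment).
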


This result is a crucial ingredient towards  our main source of new examples of groups stable in permutations:

\begin{te}[Theorem \ref{thm:am}]
 Let $G_1$ be a countable group stable in permutations and $H$ be a finite subgroup. Let $G_2$ be a countable amenable group with $Sub(G_2)$ countable, every almost normal subgroup profinitely closed, and such that $H$ is acting on $G_2$. Then $G_1*_H(G_2\rtimes H)$ is stable in permutations.
\end{te}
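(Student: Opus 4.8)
The plan is to derive Theorem~\ref{thm:am} by combining the amalgamation criterion of Theorem~\ref{th:amalgamation} with the action-trace characterisation of Theorem~\ref{te:constraint stable}, thereby reducing everything to a structural statement about the action traces of $G_2\rtimes H$.

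First I would note that $G_2\rtimes H$ is amenable, since it is an extension of the amenable group $G_2$ by the finite group $H$. Regarding $H$ as a common subgroup of $G_1$ (the prescribed finite subgroup) and of $G_2\rtimes H$ (the complement factor), Theorem~\ref{th:amalgamation} reduces the claim to proving that $G_2\rtimes H$ is $\vp$-constraint stable for every homomorphism $\vp\colon H\to\Pi_k\s{n_k}$. As $H$ is finite and $G_2\rtimes H$ is amenable, Theorem~\ref{te:constraint stable} applies with $G=G_2\rtimes H$, so it remains to show that every $\vp$-constraint action trace of $G_2\rtimes H$ is $\vp$-constraint residually finite.

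The two hypotheses on $G_2$ are exactly what is needed for this last step. I would first reduce to an ergodic action trace via an ergodic decomposition, residual finiteness of the whole trace following from that of its ergodic components. For an ergodic trace, the stabilizer map of the underlying action yields a conjugation-invariant random subgroup of $G_2$; since $Sub(G_2)$ is countable this measure is atomic, and ergodicity together with the finiteness of $H$ forces it to be supported on a single finite conjugacy orbit. Indeed, a conjugation-invariant probability measure on a countable orbit must live on a finite one, so the relevant point-stabilizer $K\leqslant G_2$ has $[G_2:N_{G_2}(K)]<\infty$, i.e.\ $K$ is almost normal. Invoking the hypothesis that almost normal subgroups are profinitely closed, I would then approximate the transitive action on $G_2/K$ by actions on finite sets coming from finite quotients of $G_2$, chosen $H$-equivariantly so as to honour the constraint $\vp$; assembling these across the ergodic components produces the desired $\vp$-constraint residually finite approximation.

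The hard part will be this final assembly: I must approximate not merely the $G_2$-action but the full $G_2\rtimes H$-action, matching the finite models of the $G_2$-part with the fixed $\vp$-image of $H$ while keeping the Hamming error uniform in $k$. Here the finiteness of $H$ (which makes only finitely many twists of each $G_2$-orbit relevant) and the profinite closedness of $K$ (which supplies $H$-invariant finite quotients separating the stabiliser data) are precisely the ingredients that render the semidirect structure compatible with the constraint.
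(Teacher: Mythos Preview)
Your high-level reduction is exactly the paper's: invoke Theorem~\ref{th:amalgamation} to reduce to $\vp$-constraint stability of $G_2\rtimes H$, then Theorem~\ref{te:constraint stable} to reduce to the statement that every $\vp$-constraint action trace of $G_2\rtimes H$ is $\vp$-constraint residually finite.

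The difference lies in how the last step is organised. The paper does \emph{not} analyse stabilisers inside $G_2$ directly. Instead it first transfers the two structural hypotheses from $G_2$ to $G_2\rtimes H$ via Proposition~\ref{prop:sd} (countability of $Sub(G_2\rtimes H)$ and profinite closedness of almost normal subgroups of $G_2\rtimes H$), observes that $H\leqslant G_2\rtimes H$ has the extension property because $H$ is a retract (Lemma~\ref{lem:retract}), and then feeds all of this into the general Proposition~\ref{prop:cRF}/Corollary~\ref{cor:constraint}. That proposition does not use an ergodic decomposition; it uses countability of $Sub(G)$ to stratify $X$ by the stabiliser map, reads off almost normality from positivity of the measure of a stratum, and then uses the extension property together with a coset-multiplicity count to pad the finite approximations $\psi_m\otimes 1_{s_k}$ up to the correct dimension $n_k$ while keeping $\theta|_H$ conjugate to $\vp$.

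Your direct route---taking the intersection of the stabiliser with $G_2$ and invoking profinite closedness there---can be made to work, but your sketch elides two points that the paper's modular proof handles cleanly. First, the stabilisers live in $G_2\rtimes H$, not in $G_2$, and recovering the full $(G_2\rtimes H)$-action trace from $G_2$-data plus $H$-equivariance is precisely the content of Proposition~\ref{prop:sd} (replace each $K_i$ by $\bigcap_{h}g_hK_ig_h^{-1}$, etc.). Second, ``assembling across ergodic components'' will not by itself land you in $\s{n_k}$ on the nose; you need a mechanism to absorb the dimensional defect, and this is exactly where the extension property of $H$ as a retract of $G_2\rtimes H$ enters in the paper's argument. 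Your last paragraph gestures at both issues but does not resolve them; the paper's Propositions~\ref{prop:sd} and~\ref{prop:cRF} are what you would end up reproving.
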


The paper is organised as follows. In Section \ref{sec:prel}, we fix the notation and explain conceptually some prior results. In Section~\ref{sec:action traces}, we define the action trace. Then we give a characterisation of stability in permutations for amenable groups 
using action traces, see Theorem \ref{thm:A}.  In Section~\ref{sec:constraint}, we review the notion of constraint stability and give an alternative to~\cite{Ar-Pa3} formulation, in a more group-theoretical language. Then, we prove a characterisation, analogous to Theorem \ref{thm:A}, of more general constraint stability, see Theorem \ref{te:constraint stable}. In Section~\ref{sec:examples constraint}, we give sufficient conditions for a group to be constraint stable. In Section~\ref{sec:examples stable}, we provide new examples of groups stable in permutations, obtained from our study of constraint stability via action traces.
We conclude, in Section~\ref{open}, with results on (very) flexible stability, on finite index subgroups stable in permutations, and a few open questions.

%%%%%%%%%%%%%%%%%%%
\section{Preliminaries}\label{sec:prel}
Let $\omega$ be a non-principal ultrafilter on $\nz$ and let $n_k\in\nz^*$ such that $\lim_{k\to\omega}n_k=\infty$. The metric ultraproduct of $\s{{n_k}}, k\in\nz$ with respect to the normalised Hamming distance is the \emph{universal sofic group}~\cite{El-Sz}: $$\Pi_{k\to\omega}\s{{n_k}}=\Pi_{k}\s{{n_k}}\slash\{(p_k)_{k}\in\Pi_{k}\s{{n_k}}:\lim_{k\to\omega}d_H(p_k,1_{n_k})=0\},$$
endowed with the bi-invariant metric defined by $d_{\omega}\left( \left( p_{k
}\right)_{k}  ,\left( q_{k }\right)_{k}  \right) =\lim_{k \rightarrow 
\omega}d_{H}\left( p_{k },q_{k }\right) $.
We write $1_\omega$ for the identity element of this group and denote by 
$$Q\colon \Pi_{k}\s{{n_k}}\twoheadrightarrow \Pi_{k\to\omega}\s{{n_k}}$$ the canonical projection homomorphism.

Using matrices, $\s{n}$ is identified with the group of permutation matrices.  Then $d_H(p,1_n)=1-Tr(p),$ where $Tr(p)$ is the normalised trace of the matrix $p\in \s{n}$. We define  $Tr\left((p_k)_k\right)=\lim_{k \rightarrow 
	\omega}Tr\left( p_{k }\right)$ on $\Pi_{k\to\omega}\s{n_k}.$

\begin{de}[Sofic morphism / sofic representation]\label{def:srep}
A group homomorphism $$\theta\colon G\to  \Pi_{k\to\omega}\s{{n_k}}$$ is called a \emph{sofic morphism} of $G$. A sofic morphism at the maximal distance to the identity, that is, a group homomorphism $$\theta\colon G\hookrightarrow  \Pi_{k\to\omega}\s{{n_k}}$$ with $Tr(\theta(g))=0$ for all $g\not=1_G$ in $G$, is called a \emph{sofic representation}. 
\end{de}

\begin{de}[Conjugated morphisms]\label{def:conj}
Two sofic morphisms $\theta_1,\theta_2\colon G\to\Pi_{k\to\omega}\s{n_k}$  are called \emph{conjugated} if there exist $p\in \Pi_{k\to\omega}\s{{n_k}}$ such that $\theta_1(g)=p\theta_2(g)p^{-1}$ for every $g\in G$.
\end{de}

The following result of Elek and Szab\'o is central to the theory of sofic groups. 
\begin{te}\cite{El-Sz2}*{Theorem 2}
A countable group $G$ is amenable if and only if every two sofic representations of $G$ are conjugated.
\end{te}

The next definition incorporates two results from our prior work.

\begin{de}\cite{Ar-Pa2}*{Theorem 4.2 and Theorem~7.2(i)}\label{def:lift}
	A countable group $G$ is called \emph{stable in permutations} if every sofic morphism is \emph{liftable}, i.e.\ for every homomorphism $\theta\colon G\to\Pi_{k\to\omega}\s{n_k}$ there exists a homomorphism $\vp\colon G\to\Pi_k\s{n_k}$, called a \emph{lift} of $\theta$, such that $\theta=Q\circ\vp$: 
	\[
	\xymatrix{
		& \Pi_k\s{n_k} \ar[d]^-{Q} \ar@{<--}[dl]_-{\exists\,\varphi} \\
		G \ar[r]^-{\theta} &\Pi_{k\to\omega}\s{n_k}
	}
	\]
	
	A countable group $G$ is called \emph{weakly stable in permutations} if every sofic representation $\theta\colon G\hookrightarrow  \Pi_{k\to\omega}\s{{n_k}}$ is liftable.
	
\end{de}

\begin{te}\cite{Ar-Pa2}*{Theorem 1.1}\label{thm:ws}
A countable amenable group is weakly stable in permutations if and only if it is residually finite.
\end{te}

Both \cite{El-Sz2}*{Theorem 2} and \cite{Ar-Pa2}*{Theorem 1.1} were originally stated for finitely generated groups. They hold true for countable groups as well, by the diagonal argument.

The main ingredients of our proof of \cite{Ar-Pa2}*{Theorem 1.1} were as follows. The direct implication is by two results: (1) countable amenable groups are sofic and (2)  sofic groups weakly stable in permutations are residually finite. The reverse implication is by the Elek--Szab\'o Theorem and the result that a sofic morphism, conjugated to a liftable one, is liftable. 

It turns out that the above reasoning can be generalised to stability in permutations and this was done in~\cite{BLT}, using the invariant random subgroups. In the present paper,
we generalise it further, to constraint stability in permutations.

The main difficulty is to extend the Elek--Szab\'o Theorem. This can be done by the Newman--Sohler Theorem, see~\cite{NS:pr} and~\cite{NS}*{Theorem 3.1}, a result in the setting of hyperfinite graphs. Its first appearance in the context of stability in permutations is in~\cite{BLT}*{Propostion 6.8}.  In order to extend it further to constraint stability in permutations, we introduce the notion of action trace, see Definition~\ref{def:At}. This is a finitary equivalent of invariant random subgroups that is better suited to our langage of ultrafilters and ultraproducts. It can be viewed as a generalisation of the usual trace on von Neumann algebras. Moreover, it is easily adaptable to  the setting of constraint metric approximations that we discuss in Section \ref{sec:constraint}.

%%%%%%%%%%%%%%%%%%%%%%%%%%%%%%%%%%%%%%%
\section{Action traces}\label{sec:action traces}

Let $G$ be a countable discrete group and $\left(X,\mu\right)$ be a standard probability space. Denote by $\pp_f(G)$ the set of finite subsets of $G$. Let $\alpha\colon G\to Aut\left(X,\mu\right)$ be a probability measure preserving action. We introduce the following invariant associated to the action.

\begin{de}[Trace]\label{def:tr}
The \emph{trace} of $\alpha\colon G\acts \left(X,\mu\right)$ is defined as follows: for each $A\in\pp_f(G)$,
\[Tr_\alpha(A)=\mu(\{x\in X:\alpha(g)(x)=x,\ \forall g\in A\}).\]
\end{de}

We use $Tr$ without index when the action $\alpha$ is clear from the context.

\begin{de}[Action trace]\label{def:At}
A function $Tr\colon \pp_f(G)\to[0,1]$ is called an \emph{action trace}  if there exists a probability measure preserving action $\alpha\colon G\to Aut(X,\mu)$ such that $Tr=Tr_\alpha$.
\end{de}

%%%%%%%%%%%%%%%%%%%%%%%%
\subsection{Action traces of homomorphisms}

If a group $G$ admits a homomorphism to $S_n$, to the cartesian product $\Pi_kS_{n_k}$ or to  the universal sofic group $\Pi_{k\to\omega}S_{n_k}$,
 then there is a natural action trace defined by such a homomorphism, induced by the canonical action $\pi\colon S_n\acts(\{1,\ldots,n\}, \mu_n)$, where $\mu_n$ is the normalised cardinal measure.

\begin{de}[Action traces of homomorphisms]
\begin{enumerate}[(i)]
\item If $\theta\colon G\to S_n$ is a homomorphism, then we define $Tr_\theta=Tr_{\pi\circ\theta}$, where  $Tr_{\pi\circ\theta}$ is the trace of the action $\pi\circ \theta\colon G\acts(\{1,\ldots,n\}, \mu_n).$
\item If $\theta\colon G\to \Pi_kS_{n_k}$ is a homomorphism, then we define $Tr_\theta=\lim_{k\to\omega}Tr_{q_k\circ \theta}$, where $q_k\circ \theta\colon G\to S_{n_k}$  and $q_k\colon\Pi_kS_{n_k}\twoheadrightarrow S_{n_k}$ is the canonical projection on the $k$-th factor. 

Such an action trace is said to be \emph{residually finite}. 
\item
If $\theta\colon G\to \Pi_{k\to\omega}S_{n_k}$ is a sofic morphism, then we define $Tr_\theta$ to be the trace of the induced action on the Loeb measure space $G\acts (X_\omega, \mu_\omega)$, where  $X_\omega=\Pi_k X_{n_k}/\!\sim_\omega$ is the algebraic ultraproduct of $X_{n_k}=\{1, \ldots, n_k\}$ and $\mu_\omega=\lim_{k\to\omega}\mu_{n_k}$~\cite{Ar-Pa2}*{Section 2.2}. 

Such an action trace is said to be \emph{sofic}. 
\end{enumerate}
\end{de}

\begin{ob}\label{ob:notD}
For an action trace $Tr$, being residually finite, or sofic, does not depend on the sequence $\{n_k\}_k$. Indeed, if there exists a homomorphism $\theta\colon G\to \Pi_kS_{n_k}$ such that $Tr=Tr_\theta$, then there exists such a homomorphism for any other sequence $\{m_k\}_k$, provided that $\lim_{k\to\omega}m_k=\infty$. The proof is the same as our proof of  \cite{Ar-Pa2}*{Proposition 6.1}.
\end{ob}

The following result is straightforward, by definitions. 

\begin{lemma}\label{lem:Tr_lift}
Let $\widetilde \theta\colon G\to \Pi_kS_{n_k}$ be a lift of a sofic morphism $\theta\colon G\to \Pi_{k\to\omega}S_{n_k}$, then $Tr_{\widetilde\theta}=Tr_\theta$.
\end{lemma}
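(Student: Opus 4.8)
The plan is to verify the identity $Tr_{\widetilde\theta}(A)=Tr_\theta(A)$ for each fixed $A\in\pp_f(G)$ separately, since two action traces, being functions on $\pp_f(G)$, coincide exactly when they agree on every finite set. First I would unwind both sides. Writing $p^g_k=(q_k\circ\widetilde\theta)(g)\in\s{n_k}$ for $g\in A$ and setting
\[F_k=\{i\in\{1,\ldots,n_k\}:p^g_k(i)=i\ \forall g\in A\},\]
the residually finite action trace is, directly from its definition through the canonical permutation action, $Tr_{\widetilde\theta}(A)=\lim_{k\to\omega}Tr_{q_k\circ\widetilde\theta}(A)=\lim_{k\to\omega}\mu_{n_k}(F_k)$. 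On the other side, $Tr_\theta(A)=\mu_\omega(\{x\in X_\omega:\theta(g)(x)=x\ \forall g\in A\})$, where the action of $\theta(g)$ on the ultraproduct $X_\omega$ is computed through the representative supplied by the lift: because $\widetilde\theta$ is a lift we have $Q\circ\widetilde\theta=\theta$, so $\theta(g)\cdot[(i_k)_k]=[(p^g_k(i_k))_k]$.

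The key step is to identify this fixed-point set inside $X_\omega$ with the internal set determined by $(F_k)_k$. For $x=[(i_k)_k]$ the condition $\theta(g)(x)=x$ unfolds, in the ultraproduct, to $\{k:p^g_k(i_k)=i_k\}\in\omega$. Here finiteness of $A$ is essential: since $\omega$ is closed under finite intersections, $x$ is fixed by every $g\in A$ if and only if $\bigcap_{g\in A}\{k:p^g_k(i_k)=i_k\}=\{k:i_k\in F_k\}\in\omega$. Hence the set of $A$-fixed points in $X_\omega$ is exactly the internal set $\Pi_kF_k/\!\sim_\omega$.

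It then remains to invoke the defining property of the Loeb measure from \cite{Ar-Pa2}*{Section 2.2}, namely that an internal set $\Pi_kF_k/\!\sim_\omega$ has $\mu_\omega$-measure $\lim_{k\to\omega}\mu_{n_k}(F_k)$. Combining the three computations yields $Tr_\theta(A)=\mu_\omega(\Pi_kF_k/\!\sim_\omega)=\lim_{k\to\omega}\mu_{n_k}(F_k)=Tr_{\widetilde\theta}(A)$, and since $A$ was arbitrary this proves $Tr_{\widetilde\theta}=Tr_\theta$. I do not expect a genuine obstacle, as the statement is essentially a bookkeeping of definitions; the single point that must be handled with care is precisely the passage through finiteness of $A$, where one uses the finite-intersection property of $\omega$ to combine the individual $\omega$-large fixed-set conditions into one internal fixed-point set before the Loeb measure is applied. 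A secondary point worth stating explicitly is that the action of $\theta(g)$ on $X_\omega$ is well-defined independently of the chosen representative, so that computing it through the representatives $p^g_k$ coming from $\widetilde\theta$ is legitimate.
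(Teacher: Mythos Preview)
Your argument is correct and is precisely the unwinding of definitions that the paper has in mind; the paper itself gives no proof beyond the remark that the lemma is ``straightforward, by definitions.'' Your explicit identification of the $A$-fixed-point set in $X_\omega$ with the internal set $\Pi_kF_k/\!\sim_\omega$ via the finite-intersection property of $\omega$, followed by the defining formula for the Loeb measure, is exactly the content being asserted.
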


\begin{de}[Free action trace]
	Let $Tr_f\colon \pp_f(G)\to[0,1]$ be the action trace associated to a free action, defined by $Tr_f(\{1_G\})=1$ and $Tr_f(A)=0$, whenever $A\not=\{1_G\}$.
\end{de}

\begin{p} \label{p:free}
We have the following characterisations.
	\begin{enumerate}
		\item A sofic morphism $\theta$ is a sofic representation if and only if $Tr_\theta=Tr_f$;
		\item A group $G$ is residually finite if and only if $Tr_f$ is residually finite;\label{p:free2}
		\item A group $G$ is sofic if and only if $Tr_f$ is sofic. 
	\end{enumerate}
\end{p}
\begin{proof}
The first assertion is by definitions of sofic representation and traces. For the direct implication of the second assertion, we construct the homomorphism $\vp\colon G\to\Pi_k\s{n_k}$ by letting $G$ act on $G/N_k$, where $\{N_k\}_k$ is a decreasing chain of finite index subgroups with $\cap_kN_k=\{1_G\}$. For the reverse implication, observe that the existence of an injective homomorphism into a cartesian product of finite groups is equivalent to residual finiteness. The third item is by \cite{El-Sz}*{Theorem 1} stating that a group is sofic if and only if it admits a sofic representation.
\end{proof}

%%%%%%%%%%%%%
\subsection{Action traces of amenable groups}  
The next result is the action trace generalisation of the well-known fact that every amenable group is sofic.
An equivalent statement, in the setting of invariant random subgroups, is  \cite{BLT}*{Proposition 6.6}.
We provide a different proof, using ultraproducts and sofic equivalence relations. In this context, the result is a consequence of the known fact that every amenable equivalence relation is sofic. 

\begin{p}
\label{amenable-sofic}
	If $G$ is a countable amenable  group, then every action trace  is sofic.
\end{p}
\begin{proof}
	Let $Tr$ be an action trace of $G$ and  $\alpha\colon G\to Aut(X,\mu)$ be a probability measure preserving action such that $Tr=Tr_\alpha$. Let $E_\alpha$ be the orbit equivalence relation of $\alpha$ on $(X,\mu)$. Since $G$ is amenable, by the Ornstein--Weiss theorem, $E_\alpha$ is hyperfinite. It follows that $E_\alpha$ is treeable. By \cite{Pa1}*{Proposition 3.16}, $E_\alpha$ is a sofic equivalence relation (cf.~\cite{El-Li} that uses a different but, by \cite{Pa1}*{Proposition 3.22}, equivalent terminology). 
	
	Let $M(E_\alpha)$ be the tracial von Neumann algebra associated to $E_\alpha$ by the Feldman--Moore construction and $A\subseteq M(E_\alpha)$ be the corresponding Cartan pair of $E_\alpha$~\cite{Pa1}*{Section 2.3}.  By \cite{Pa1}*{Proposition 2.17}, there exists a sofic embedding $\theta\colon M(E_\alpha)\to\Pi_{k\to\omega}M_{n_k}$ into the metric ultraproduct of matrix algebras equipped with the normalised trace.  
	
	The image of $\alpha$ is included in $[E_\alpha]$, the full group of $E_\alpha$, where $[E_\alpha]=\{\vp\in Aut(X,\mu):(x,\vp(x))\in E_\alpha\ \forall x\}$. Then, using the canonical injection $\iota\colon [E_\alpha] \hookrightarrow M(E_\alpha)$~\cite{Pa1}*{Definition 2.13}, we have a map $\iota\circ\alpha\colon G\to M(E_\alpha)$. For  a finite subset $F\subseteq G$, let $c_F=\{x\in X:\alpha(g)(x)=x\ \forall g\in F\}$, and let $Q_F\in A$ be the projection on $c_F$. Then, by construction of $M(E_\alpha)$, $Tr(Q_F)=\mu(c_F)=Tr_\alpha(F)$.
	
	Let us prove that $\theta\circ\iota\circ\alpha\colon G\to\Pi_{k\to\omega}\s{n_k}$ is the required morphism.  The image is in $\Pi_{k\to\omega}\s{n_k}$, by definition of a sofic embedding~\cite{Pa1}*{Definition 2.16}. For finite $F\subseteq G$, let $P_F$ be the projection on the set of common fixed points in the Loeb measure space of $\theta\circ\iota\circ\alpha(g)$ for all $g\in F$.  We have to show that $Tr(P_F)=Tr_\alpha(F)$.
	
	We show that actually $P_F=\theta(Q_F)$. Since $\theta$ is trace preserving, $Tr(\theta(Q_F))=Tr(Q_F)=Tr_\alpha(F)$, then this concludes the proof. For every $g\in G$, $Tr(P_{\{g\}})=Tr(\theta\circ\iota\circ\alpha(g))=Tr(\iota\circ\alpha(g))=Tr(Q_{\{g\}})=Tr(\theta(Q_{\{g\}}))$. Since $\theta(Q_{\{g\}})\leqslant P_{\{g\}}$, it follows that $\theta(Q_{\{g\}})=P_{\{g\}}$. Since $\theta$ is a morphism, then $P_F=\Pi_{g\in F}P_{\{g\}}=\Pi_{g\in F}\theta(Q_{\{g\}})=\theta(\Pi_{g\in F}Q_{\{g\}})=\theta(Q_F)$.
\end{proof}

The next result is a generalisation of the Elek--Szab\'o Theorem~\cite{El-Sz2}*{Theorem 2}. It  is essentially the Newman--Sohler Theorem mentioned above, see~\cite{NS:pr} and~\cite{NS}*{Theorem 3.1}. Our proof, using action traces and action graphs, is in the arXiv version (v1) of the present article. Here,  in unison with the preceding proof and for completeness, we present a subsequent alternative proof from \cite{Ha-El}*{Theorem 5.1}, using  
action traces and sofic equivalence relations.
\begin{te}\label{thm:generalised Elek} 
Let $G$ be a countable amenable group. Let $\theta_1,\theta_2\colon G\to\Pi_{k\to\omega}S_{n_k}$ be sofic morphisms. Then, $\theta_1$ and $\theta_2$ are conjugated if and only if $Tr_{\theta_1}=Tr_{\theta_2}$. 
\end{te}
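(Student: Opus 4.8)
The plan is to prove the two directions separately, with the forward direction being straightforward and the converse being the substantive content.

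For the easy direction, I would show that if $\theta_1$ and $\theta_2$ are conjugated then $Tr_{\theta_1}=Tr_{\theta_2}$. Conjugation by some $p\in\Pi_{k\to\omega}S_{n_k}$ induces a measure-preserving isomorphism of the associated Loeb measure spaces that intertwines the two induced actions; since $Tr_\theta$ is defined purely in terms of the measures of fixed-point sets of the action on $(X_\omega,\mu_\omega)$, and these measures are invariant under measure-preserving conjugacy, the two traces coincide. In fact, this direction does not even use amenability.

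The core of the argument is the converse: assuming $Tr_{\theta_1}=Tr_{\theta_2}$, I would construct a conjugating element. The strategy is to reduce to the finitely generated case and then exploit the hyperfiniteness coming from amenability together with the Newman--Sohler rigidity theorem. First I would handle the finitely generated case: fix generators $x_1,\ldots,x_m$ so that $G$ is a quotient of $\ff_m$, lift both sofic morphisms to homomorphisms $\ff_m\to\Pi_k S_{n_k}$, and form the associated action graphs $\Gamma_{\vp_k}$ and $\Gamma_{\psi_k}$ on $n_k$ vertices for each $k$. By Proposition~\ref{prop:stat}, the statistical distance $d_{stat}(\Gamma_{\vp_k},\Gamma_{\psi_k})$ is controlled by the differences $|S_{\vp_k}(A_j,B_j)-S_{\psi_k}(A_j,B_j)|$. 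Because the $S$-numbers are determined by the $Tr$-numbers (Proposition~\ref{inclusion-exclusion}), and because $Tr_{\theta_1}=Tr_{\theta_2}$ means the limits $\lim_{k\to\omega}Tr_{\vp_k}(A)=\lim_{k\to\omega}Tr_{\psi_k}(A)$ agree for every finite $A$, the statistical distances tend to $0$ along $\omega$. Amenability of $G$ ensures that the family of action graphs arising from $G$ is hyperfinite, so Theorem~\ref{th:NS-oriented} applies: for $\omega$-almost every $k$ there is a bijection $\rho_k\colon V(\Gamma_{\vp_k})\to V(\Gamma_{\psi_k})$ making the labeled edge sets agree up to $\delta n_k$ errors. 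Assembling the $\rho_k$ into an element $p=Q((\rho_k)_k)$ of the universal sofic group yields the conjugacy $\theta_1=p\,\theta_2\,p^{-1}$.

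The main obstacle is making the limiting argument with the ultrafilter and the error parameter $\delta$ precise: one must choose, for each fixed $\delta>0$, a threshold $f(\delta)$ from Theorem~\ref{th:NS-oriented} and verify that $d_{stat}(\Gamma_{\vp_k},\Gamma_{\psi_k})<f(\delta)$ on an $\omega$-large set, then run a diagonal/compactness argument over a sequence $\delta\to 0$ to produce a single $p$ conjugating the two morphisms exactly in the metric ultraproduct. The subtlety is that the bijections $\rho_k$ depend on $\delta$, so I would need a standard ultralimit extraction to obtain a coherent sequence $(\rho_k)_k$ whose induced conjugation sends $\theta_2(g)$ to $\theta_1(g)$ with $d_\omega$-error zero for each $g$ in the generating set, hence for all $g\in G$. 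Finally, to pass from finitely generated $G$ to countable $G$, I would exhaust $G$ by an increasing chain of finitely generated subgroups, apply the finitely generated case to each, and extract a conjugating element compatible with the whole chain, again via an ultrafilter argument; amenability is inherited by subgroups so hyperfiniteness is preserved at each stage.
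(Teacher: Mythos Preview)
Your proposal is correct and follows essentially the same route as the paper: reduce to the finitely generated case, form the labeled action graphs from lifts to $\ff_m$, use amenability (via Schramm) to get hyperfiniteness, combine Propositions~\ref{inclusion-exclusion} and~\ref{prop:stat} to see that $d_{stat}\to 0$ along $\omega$, apply Theorem~\ref{th:NS-oriented} to produce the conjugating bijections, and then handle countable $G$ by exhausting with finitely generated subgroups and a diagonal argument. Your discussion of the $\delta$-dependent extraction is in fact more explicit than the paper's, which simply invokes Theorem~\ref{th:NS-oriented} to obtain the sequence $(\rho_k)_k$ directly.
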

\begin{proof} We use the above notation.
For every action trace $Tr$, there exists a Bernoulli action $\beta\colon G\to Aut(X,\mu)$ such that $Tr_\beta=Tr$. For every homomorphism $\theta\colon G\to\Pi_{k\to\omega}S_{n_k}$, there exists a sofic embedding $\Phi\colon M(E_\beta)\to\Pi_{k\to\omega}M_{n_k}$ such that $\Phi|_{\iota\circ\beta}=\theta$~\cite{Ha-El}*{Theorem 1.1}. Let  $\Phi_1$ and $\Phi_2$ be such sofic embeddings associated to given $\theta_1$ and $\theta_2$. Since $G$ is amenable, then $E_\beta$ is hyperfinite, by the Ornstein--Weiss theorem. These $\Phi_1$ and $\Phi_2$ are conjugated by an element of $\Pi_{k\to\omega}\s{n_k}$ by \cite{Pa1}*{Proposition 1.20}. This element also conjugates $\theta_1$ and $\theta_2$.
\end{proof}

\begin{ex}\label{ex:Trg}
 The hypothesis  $Tr_{\theta_1}=Tr_{\theta_2}$  in Theorem~\ref{thm:generalised Elek} is on all trace numbers $Tr_{\theta}(\{g_1,\ldots,g_n\})$, for all $ g_1,\ldots, g_n\in G, n\in \nz^\ast$. 
Requiring only $Tr_{\theta_1}(\{g\})=Tr_{\theta_2}(\{g\})$ for all $g\in G$  is not sufficient to deduce the conjugacy of $\theta_1$ and $\theta_2$. Here is a counter-example, even in finite groups.

Let $G=\zz_2\times\zz_2=\langle a,b \mid a^2=b^2=(ab)^2=1\rangle$ and define $\theta_1,\theta_2\colon G\to \s{6}$ as follows:
\begin{align*}
\theta_1(a)=(12)(34)(5)(6),\ \ &\theta_1(b)=(12)(3)(4)(56),\ \  \theta_1(ab)=(1)(2)(34)(56);\\
\theta_2(a)=(12)(34)(5)(6),\ \ &\theta_2(b)=(13)(24)(5)(6),\ \ \theta_2(ab)=(14)(23)(5)(6).
\end{align*}
Then the homomorphisms $\theta_1, \theta_2$ satisfy $Tr(\theta_1(g))=Tr(\theta_2(g))=1/3$, or equivalently,  $Tr_{\theta_1}(\{g\})=Tr_{\theta_2}(\{g\})=1/3$ for all $g\not=1_G$ in $G$. However, $\theta_2$ 
has two global fixed points,
while $\theta_1$ does not have any. We deduce that $\theta_1$ and $\theta_2$ are not conjugated.

\end{ex}

 It is interesting to compare Theorem~\ref{thm:generalised Elek}  with an analogous result on \emph{hyperlinear morphisms}. It might be known to experts, although it is not in the literature.
 We formulate it in our terms, using hyperlinear analogues of Definition~\ref{def:srep} and Definition~\ref{def:conj}, where $(S_{n_k}, d_H)$ is replaced by $(U_{n_k}, d_{HS})$, the finite rank unitary group  endowed with the normalised Hilbert-Schmidt distance, defined, for two unitary matrices $u,v\in U_{n},$ by
$
d_{HS}(u,v)=\sqrt{Tr(u-v)^\ast(u-v)},
$
where $Tr$ is the normalised trace.  

\begin{te}\label{thm:hyper}
Let $G$ be a countable amenable group and $\theta_1,\theta_2\colon G\to\Pi_{k\to\omega}U_{n_k}$  be hyperlinear morphisms. Then, $\theta_1$ and $\theta_2$ are conjugated if and only if $Tr(\theta_1(g))=Tr(\theta_2(g))$ for all $g\in G$.\end{te}
\begin{proof}
We prove the non-trivial ``if'' direction. Let $\vp\colon G\to\cz$ be defined by $\vp(g)=Tr(\theta_i(g))$ with $i=1$ or $2$. Then $\vp$ is a positive defined function, invariant on conjugacy classes, i.e. a character. Let $(M,Tr)$ be the von Neumann algebra generated by the GNS representation associated with $(G,\vp)$. Since $G$ is amenable, then $M$ is hyperfinite.

The von Neumann algebra generated by $\theta_1(G)$ inside $\Pi_{k\to\omega}U_{n_k}$ is isomorphic to $(M,Tr)$. The same is true for $\theta_2(G)$. These are two embeddings of the same hyperfinite von Neumann algebra into $\Pi_{k\to\omega}U_{n_k}$. By \cite{MR2338855}*{Proposition 1}, translated into the ultraproduct language by standard arguments, these two embeddings are conjugated. 
\end{proof}

In particular, there is indeed a unitary matrix which conjugates $\theta_1$ and $\theta_2$ in Example~\ref{ex:Trg}, although we have seen that there is no such permutation matrix. 

Thus, the preceding two formulations using action trace and usual trace, respectively,  allow us to distinguish sofic and hyperlinear morphisms of amenable groups. In the hyperlinear case, we deal with the trace, i.e. a classical character. While in the sofic case, we require the action trace, which is a `character like' function associated to the action $G\acts (X_\omega, \mu_\omega)$ on the Loeb measure space. The difference in the hypothesis on these two types of traces  explains a greater  difficulty to prove the stability results in permutations versus analogous results in unitary matrices.

\subsection{Stability in permutations for amenable groups}
The following known results on stability in permutations will be generalised to the setting of constraint stability in permutations in Section~\ref{sec:constraint}.
We give new proofs, in our language of ultraproducts and action traces. 

The next result is the action trace generalisation of  \cite{Ar-Pa2}*{Theorem 4.3} that states that a sofic group stable in permutations has to be residually finite.

\begin{p}\cite[Theorem 1.3 (i)]{BLT}.\label{sofic-RF}
Let $G$ be a countable group. If $G$ is stable in permutations, then any sofic action trace is residually finite.
\end{p}
\begin{proof}
Let $Tr\colon \pp_f(G)\to[0,1]$ be a sofic action trace. Thus, there exists $\theta\colon G\to\Pi_{k\to\omega}S_{n_k}$ such that $Tr=Tr_\theta$. Since $G$ is stable in permutations, then there exists $\widetilde\theta\colon G\to\Pi_kS_{n_k}$ such that $\theta=Q\circ{\tilde\theta}$. By Lemma~\ref{lem:Tr_lift},  $Tr_\theta=Tr_{\widetilde\theta}$, and hence, $Tr$ is residually finite.
\end{proof}

The next result and its proof are the straightforward action trace generalisation of \cite{Ar-Pa2}*{Theorem 1.1}  that states that a countable amenable group is weakly stable in permutations if and only if it is residually finite. 

\begin{te}\cite{BLT}*{Theorem 1.3 (ii)}.\label{thm:A}
Let $G$ be a countable amenable group. Then $G$ is stable in permutations if and only if every action trace is residually finite.
\end{te}
\begin{proof}
If $G$ is stable in permutations, it follows by Propositions \ref{amenable-sofic} and \ref{sofic-RF} that every action trace is residually finite.

Conversely,  let  $G$ be a group such that every action trace is residually finite. Let $\theta\colon G\to\Pi_{k\to\omega}S_{n_k}$ be a sofic morphism. Since $Tr_\theta$ is residually finite and by Observation~\ref{ob:notD}, there exists $\alpha\colon G\to\Pi_kS_{n_k}$ such that $Tr_\alpha=Tr_\theta$. By Theorem \ref{thm:generalised Elek}, there exists $p\in\Pi_kS_{n_k}$ such that $Q\circ(p\alpha(g) p^{-1})=\theta(g)$ for every $g\in G$. Then $p\alpha p^{-1}$ is a lift of $\theta$, and hence, $G$ is stable in permutations.
\end{proof}

\subsection{Action traces and invariant random subgroups}  

As we alluded to above, action traces can be viewed as finitary analogues of invariant random subgroups. We explain this analogy for the reader's convenience. It is not used in our arguments.

Given a countable discrete group $G$, we denote by $2^G$ the power set of $G$ and by $Sub(G)$ the set of subgroups of $G$, endowed  with the subspace topology induced by the product topology on $2^G$. 
Since $Sub(G)$ is a closed subset of $2^G$, it is compact (as $2^G$ is, by Tychonoff's theorem). The group $G$  acts on $Sub(G)$ by conjugation. An \emph{invariant random subgroup} (briefly, IRS) is a $G$-invariant Borel probability measure on $Sub(G)$.

Let $\alpha\colon G\to Aut(X,\mu)$ be a probability measure preserving action.  Then $Stab\colon X\to Sub(G), x\mapsto stab_\alpha(x)$, where $stab_\alpha(x)$ is the stabiliser subgroup, is a $G$-equivariant function.  The pushforward measure $Stab_\ast\mu$ is an IRS that we denote by $\mu_\alpha$. If we also consider the action trace $Tr_\alpha$ associated to the action $\alpha$, see Definition~\ref{def:tr}, then we see that the two objects are linked by the formula: $Tr_\alpha(A)=\mu_\alpha(\{H\leqslant G:A\subseteq H\})$ for each $A\in\pp_f(G)$. Indeed,
\begin{align*}
	Tr_\alpha(A)&=\mu(\{ x\in X : \alpha(g)x=x, \forall g\in A\})=\mu(\{x\in X:A\subseteq stab_\alpha(x)\})\\
	&=\mu(\{Stab^{-1}(H):\forall H\leqslant G, A\subseteq H\})=\mu_\alpha(\{H\leqslant G:A\subseteq H\}).
\end{align*}

Given an arbitrary  IRS $\mu$,  by a result of Abert, Glasner and Virag~\cite{AGV}*{Proposition 13}, see also \cite{Ha-El}*{Section 3} and references therein, there exists  a probability measure preserving action $\alpha\colon G\to Aut(X,\mu)$ such that $\mu_\alpha=\mu$. Then $Tr_\alpha$ of this $\alpha$, see Definition~\ref{def:tr},  is the action trace canonically associated to $\mu$. Alternatively, we can directly define the action trace $Tr$ associated to $\mu$ using the above formula, without constructing $\alpha$. However, to satisfy Definition~\ref{def:At}, we still need to ensure that this $Tr$ is induced by a measure-preserving action.

For the reverse construction, given an action trace $Tr\colon \pp_f(G)\to[0,1]$, by Definition~\ref{def:At}, there exists (but not necessarily explicitly given) an associated probability measure preserving action
 $\alpha\colon G\to Aut(X,\mu)$. Then $\mu_\alpha=Stab_\ast\mu$ is the IRS associated to $Tr$. Alternatively, notice that an IRS is determined by its values on the fundamental sets $\{H\leqslant G:A\subseteq H, B\cap H=\emptyset\}$, for $A,B\in\pp_f(G)$. These values can be computed from $Tr$, using the inclusion-exclusion principle:
\[\mu(\{H\leqslant G:A\subseteq H, B\cap H=\emptyset\})=\sum_{V\subseteq B}(-1)^{|V|}\cdot Tr(A\cup V).\]

Thus, even if an action trace is never a measure (as the map $\pp_f(G)\ni A\mapsto \{H\leqslant G:A\subseteq H\}\subseteq Sub(G)$ is not functorial on the union of sets, and $\pp_f(G)$ is neither a $\sigma$-algebra nor an algebra),
there is a canonical transition between action traces and IRS's. However, action traces are easier to define and more tractable due to their finitary nature, especially in the von Neumann algebras like setting where they generalise the trace on type II$_1$ factors. For instance, this is evident when comparing sofic and hyperlinear morphisms in Theorems \ref{thm:generalised Elek} and \ref{thm:hyper}. Moreover, action traces admit a natural generalisation to \emph{constraint} action traces, as we show in Section \ref{sec:caT}.

%%%%%%%%%%%%%%%%%%%%%%%%
\section{Constraint stability of metric approximations}\label{sec:constraint}

Let $G_1,G_2$ be two groups with a common subgroup $H$. We would like to analyse whether  $G_1*_HG_2$ is stable in permutations.  Take a sofic morphism $\theta\colon G_1*_HG_2\to\Pi_{k\to\omega}S_{n_k}$. Assume that $G_1$ is stable in permutations, so construct $\vp_1\colon G_1\to\Pi_kS_{n_k}$ such that $Q\circ\vp_1=\theta|_{G_1}$.  In order to prove stability in permutations of  $G_1*_HG_2$ it is enough to get $\vp_2\colon G_2\to\Pi_kS_{n_k}$ such that $Q\circ\vp_2=\theta|_{G_2}$ and $\vp_2|_H=\vp_1|_H$. So, we want a lift for the sofic morphism $\theta|_{G_2}$ that is prescribed on the subgroup $H$. The existence of such a lift is what we call \emph{constraint stability}, see~\cite{Ar-Pa3} and below.

\subsection{Constraint stability in permutations}

The following are instances of a general concept of a \emph{constraint lift}~\cite[Definition 2.15]{Ar-Pa3} and of a general theorem characterising \emph{constraint stability as a lifting property} of constraint morphisms~\cite[Theorem 2.16]{Ar-Pa3}.

\begin{de}[Constraint morphism / constraint lift / constraint stability]\label{de:clift}  Let $H\leqslant G$ be countable groups and $\vp\colon H\to\Pi_k\s{n_k}$, $\theta\colon G\to\Pi_{k\to\omega}\s{n_k}$ be homomorphisms. We say that:
	\begin{enumerate}[(i)]
		\item $\theta$ is \emph{$\vp$-constraint} if $\theta|_H=Q\circ\vp$; 
		\item $\theta$ is  \emph{$\vp$-constraint liftable} if there exists a homomorphism $\tilde\theta \colon G\to\Pi_k\s{n_k}$, called a \emph{$\vp$-constraint lift} of $\theta$, such that $\theta=Q\circ\tilde\theta$ and $\tilde\theta |_H=\vp$;
		
		\item $G$ is   \emph{constraint $\vp$-stable} if every $\vp$-constraint homomorphism $\theta\colon G\to\Pi_{k\to\omega}\s{n_k}$ is $\vp$-constraint liftable.\label{de:clift_iii}
	\end{enumerate}
	
	We say a \emph{liftable} homomorphism (it was termed \emph{perfect} in~\cite[Definition 4.1]{Ar-Pa2}), a \emph{lift},  and $G$ is \emph{stable in permutations},
	whenever $H=\{1_H\}$ is the trivial subgroup in the preceding definitions. 
\end{de}

In the case  $H=\{1_H\}$, we recover Definition~\ref{def:lift}.

In \cite{Ar-Pa3}, the constraint stability has been introduced in countable groups using the language of equations with coefficients and for arbitrary metric approximations (not only by permutations).  In the present paper, Definition~\ref{de:clift} uses pairs $H\leqslant G$ of countable groups and their homomorphisms instead of equations. These two viewpoints on constraint stability are easily seen to be equivalent in the setting of finitely generated groups:  fixing a finite set of generators of $H$ as coefficients leads to finitely many equations as in~\cite[Definition 2.4]{Ar-Pa3} and, conversely, generating a subgroup by given coefficients, yields a pair $H\leqslant G$ as in Definition~\ref{de:clift}.

In \cite{Ar-Pa3}, the constraint stability has been characterised as a lifting property in a theorem, the above-mentioned \cite[Theorem 2.16]{Ar-Pa3}. This characterisation is now the content of Definition~\ref{de:clift}~(\ref{de:clift_iii}).

Definition~\ref{de:clift}  extends immediately to arbitrary metric approximations and has a natural reformulation, in the spirit of \cite{Ar-Pa3}, as constraint stability of almost solutions of systems consisting of countably many equations with coefficients.

\subsection{Constraint action traces}\label{sec:caT}
Now, we transport the results of the previous section to a more general \emph{constraint} setting. That is, we fix a subgroup $H$ of $G$ and a homomorphism $\vp\colon H\to\Pi_k\s{n_k}$. Every homomorphism of $G$ to $\Pi_k\s{n_k}$ or $\Pi_{k\to\omega}\s{n_k}$ will be an extension of $\vp$ or $Q\circ\vp$, respectively.

\begin{de}[Constraint action traces]\label{def:caT}  Let $H\leqslant G$ be countable groups, $\vp\colon H\to\Pi_k\s{n_k}$ be a homomorphism and $Tr\colon \pp_f(G)\to[0,1]$ be an action trace. We say that:
\begin{enumerate}[(i)]
\item $Tr$ is \emph{$\vp$-constraint} if $Tr(A)=Tr_\vp(A)$ for each $A\in\pp_f(H)$;  
\item  $Tr$ is   \emph{$\vp$-constraint residually finite} if there exists a homomorphism $\theta\colon G\to\Pi_k\s{n_k}$ such that $Tr=Tr_\theta$ and $\theta|_H=\vp$;

\item$Tr$ is   \emph{$\vp$-constraint sofic} if there exists a sofic morphism $\theta\colon G\to\Pi_{k\to\omega}\s{n_k}$ such that $Tr=Tr_\theta$ and $\theta|_H=Q\circ\vp$.

\end{enumerate}
\end{de}

Observe that an action trace that is $\vp$-constraint sofic or $\vp$-constraint residually finite   has to be $\vp$-constraint. The next two propositions yield the converse statements, under the assumptions  on amenability or constraint stability, respectively.

\begin{p}\label{c-amenbale-sofic}
Let $H\leqslant G$ be countable amenable groups and $\vp\colon H\to\Pi_k\s{{n_k}}$ be a homomorphism. Let $Tr\colon\pp_f(G)\to[0,1]$ be a $\vp$-constraint action trace. Then $Tr$ is $\vp$-constraint sofic.
\end{p}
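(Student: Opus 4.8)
The plan is to reduce to the unconstrained statement, Proposition~\ref{amenable-sofic}, and then correct the restriction to $H$ by a conjugation furnished by the uniqueness-up-to-conjugacy result, Theorem~\ref{thm:generalised Elek}. First, since $G$ is countable amenable and $Tr$ is an action trace, Proposition~\ref{amenable-sofic} provides a sofic morphism realising $Tr$. A priori this morphism lands in $\Pi_{k\to\omega}\s{m_k}$ for some sequence $(m_k)_k$, but by Observation~\ref{ob:notD} I may arrange it to use the very sequence $(n_k)_k$ attached to $\vp$. Thus I obtain a sofic morphism $\theta\colon G\to\Pi_{k\to\omega}\s{n_k}$ with $Tr_\theta=Tr$. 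The only defect is that $\theta|_H$ need not equal $Q\circ\vp$; from the $\vp$-constraint hypothesis I know merely that their action traces agree.

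Next I would compare the two sofic morphisms of $H$ now in play. The restriction $\theta|_H$ has action trace equal to $Tr$ restricted to $\pp_f(H)$, since the defining fixed-point condition for $A\in\pp_f(H)$ only involves elements of $H$. On the other hand $Q\circ\vp$ has action trace $Tr_\vp$, because $\vp$ is a lift of $Q\circ\vp$ and hence $Tr_{Q\circ\vp}=Tr_\vp$ by Lemma~\ref{lem:Tr_lift} applied to $H$ in place of $G$. As $Tr$ is $\vp$-constraint, $Tr(A)=Tr_\vp(A)$ for every $A\in\pp_f(H)$, so these two action traces of $H$ coincide. Since $H$ is amenable, being a subgroup of the amenable group $G$, Theorem~\ref{thm:generalised Elek} applies and yields $p\in\Pi_{k\to\omega}\s{n_k}$ with $\theta|_H(h)=p\,(Q\circ\vp)(h)\,p^{-1}$ for all $h\in H$.

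Finally I would set $\theta'(g)=p^{-1}\theta(g)p$ for $g\in G$. This is again a sofic morphism, and by the choice of $p$ we get $\theta'|_H=Q\circ\vp$. It remains to check that conjugation does not disturb the action trace: a fixed element $p\in\Pi_{k\to\omega}\s{n_k}$ acts as a measure-preserving bijection of the Loeb space $(X_\omega,\mu_\omega)$, and it maps the common-fixed-point set of $\{\theta(g):g\in A\}$ bijectively and measure-preservingly onto that of $\{\theta'(g):g\in A\}$ for each $A\in\pp_f(G)$; hence $Tr_{\theta'}=Tr_\theta=Tr$. Therefore $\theta'$ witnesses that $Tr$ is $\vp$-constraint sofic.

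The conceptual content is imported wholesale from Proposition~\ref{amenable-sofic} and Theorem~\ref{thm:generalised Elek}, so I do not expect a genuine obstacle. The two delicate points are purely bookkeeping: aligning the index sequence of the morphism produced by Proposition~\ref{amenable-sofic} with the one carried by $\vp$ (resolved by Observation~\ref{ob:notD}), and the invariance of the action trace under conjugation by an element of the universal sofic group (resolved by viewing that conjugation as a measure-preserving transformation of the Loeb space). The essential idea, and the step I would highlight, is that amenability lets one first realise the prescribed trace sofically in an arbitrary position and then rotate it into the required constrained position, rather than having to build the extension of $\vp$ directly.
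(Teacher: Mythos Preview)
Your proof is correct and follows essentially the same route as the paper: apply Proposition~\ref{amenable-sofic} to realise $Tr$ by some sofic morphism $\theta$, then use Theorem~\ref{thm:generalised Elek} on $H$ to conjugate $\theta|_H$ to $Q\circ\vp$, and take the conjugated morphism of $G$. The paper's version is terser, omitting the remarks on sequence alignment (Observation~\ref{ob:notD}), on why the two action traces on $H$ agree, and on conjugation-invariance of the action trace, but these are exactly the bookkeeping points you spell out.
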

\begin{proof}
By Proposition \ref{amenable-sofic}, there exists $\theta\colon G\to\Pi_{k\to\omega}\s{{n_k}}$ such that $Tr=Tr_\theta$. Then, $\theta|_H$ and $Q\circ\vp$ are two sofic morphisms of $H$ with the same action trace. By Theorem \ref{thm:generalised Elek}, there exists $p\in\Pi_{k\to\omega}\s{{n_k}}$ such that $p(\theta|_H)p^{-1}=Q\circ\vp$. Then, $p\theta p^{-1}$ is the required sofic morphism of $G$. \end{proof}

\begin{p}\label{c-sofic-RF}
Let $H\leqslant G$ be countable groups and $\vp\colon H\to\Pi_k\s{{n_k}}$ be a homomorphism such that $G$ is $\vp$-constraint stable. 
Let $Tr\colon\pp_f(G)\to[0,1]$ be a $\vp$-constraint sofic action trace. Then $Tr$ is $\vp$-constraint residually finite. 
\end{p}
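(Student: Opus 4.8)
The plan is to argue exactly as in the proof of Proposition~\ref{sofic-RF}, simply carrying the constraint datum $\vp$ along at every step: the present statement is the $\vp$-constraint analogue of that result, so the skeleton (unwind the sofic hypothesis, apply stability to obtain a lift, transport the trace via Lemma~\ref{lem:Tr_lift}) is identical, and the only genuinely new point is to verify that the restriction condition $\tilde\theta|_H=\vp$ survives the lifting.

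First I would unwind the hypothesis. Since $Tr$ is $\vp$-constraint sofic, Definition~\ref{def:caT} supplies a sofic morphism $\theta\colon G\to\Pi_{k\to\omega}\s{n_k}$ with $Tr=Tr_\theta$ and $\theta|_H=Q\circ\vp$. This last equality is precisely the condition, in the sense of Definition~\ref{de:clift}~(i), that $\theta$ be a $\vp$-constraint homomorphism. Next I would invoke the $\vp$-constraint stability of $G$: because $\theta$ is $\vp$-constraint, Definition~\ref{de:clift}~(\ref{de:clift_iii}) guarantees that $\theta$ is $\vp$-constraint liftable, so there is a homomorphism $\tilde\theta\colon G\to\Pi_k\s{n_k}$ with $\theta=Q\circ\tilde\theta$ and, crucially, $\tilde\theta|_H=\vp$. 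Finally, since $\tilde\theta$ is a lift of $\theta$, Lemma~\ref{lem:Tr_lift} yields $Tr_{\tilde\theta}=Tr_\theta=Tr$. Hence $\tilde\theta$ satisfies $Tr=Tr_{\tilde\theta}$ together with $\tilde\theta|_H=\vp$, which is exactly the assertion that $Tr$ is $\vp$-constraint residually finite, by Definition~\ref{def:caT}~(ii).

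There is no substantial obstacle here; the content is packaged entirely into Definition~\ref{de:clift}~(\ref{de:clift_iii}) and Lemma~\ref{lem:Tr_lift}. The one thing to watch is the difference between the constraint and the plain setting: it is not enough that $\tilde\theta$ lift $\theta$, one needs the lift to restrict on $H$ to $\vp$ itself and not merely to some homomorphism projecting to $Q\circ\vp$. This on-the-nose identity $\tilde\theta|_H=\vp$ is delivered for free by the definition of $\vp$-constraint liftability, so the only care required is to quote that clause rather than the weaker condition $Q\circ(\tilde\theta|_H)=Q\circ\vp$.
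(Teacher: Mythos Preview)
Your proposal is correct and follows exactly the same approach as the paper's proof: unwind the $\vp$-constraint sofic hypothesis to obtain $\theta$, apply $\vp$-constraint stability to get a lift $\tilde\theta$ with $\tilde\theta|_H=\vp$, and invoke Lemma~\ref{lem:Tr_lift} to conclude. The paper's argument is identical, only more terse.
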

\begin{proof}
Since $Tr$ is  $\vp$-constraint sofic, then there exists $\theta\colon G\to\Pi_{k\to\omega}\s{{n_k}}$ such that $\theta|_H=Q\circ\vp$ and $Tr=Tr_\theta$. Since $G$ is $\vp$-constraint stable, then there exists a homomorphism $\widetilde\theta\colon G\to\Pi_k\s{{n_k}}$ such that $\theta=Q\circ\widetilde\theta$ and $\widetilde\theta|_H=\vp$. By Lemma~\ref{lem:Tr_lift}, we have $Tr_{\widetilde\theta}=Tr_\theta$. Therefore, $Tr$ is $\vp$-constraint residually finite.
\end{proof}

These two propositions immediately imply:

\begin{cor}\label{cor:cstab}
Let $H\leqslant G$ be countable amenable groups and $\vp\colon H\to\Pi_k\s{{n_k}}$ be a homomorphism such that $G$ is $\vp$-constraint stable. Then every $\vp$-constraint action trace is $\vp$-constraint residually finite.
\end{cor}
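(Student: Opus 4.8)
The plan is to derive Corollary~\ref{cor:cstab} as an immediate consequence of the two preceding propositions, by chaining them together under the combined hypotheses. The statement to prove is: if $H\leqslant G$ are countable amenable groups, $\vp\colon H\to\Pi_k\s{n_k}$ is a homomorphism, and $G$ is $\vp$-constraint stable, then every $\vp$-constraint action trace is $\vp$-constraint residually finite.

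First I would fix an arbitrary $\vp$-constraint action trace $Tr\colon\pp_f(G)\to[0,1]$ and observe that the amenability hypothesis on $H\leqslant G$ is exactly what is needed to invoke Proposition~\ref{c-amenbale-sofic}. That proposition upgrades a $\vp$-constraint action trace to a $\vp$-constraint \emph{sofic} action trace; concretely, it produces a sofic morphism $\theta\colon G\to\Pi_{k\to\omega}\s{n_k}$ with $Tr=Tr_\theta$ and $\theta|_H=Q\circ\vp$. So after this step, $Tr$ is known to be $\vp$-constraint sofic.

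Next I would feed this conclusion into Proposition~\ref{c-sofic-RF}, whose hypothesis requires precisely that $G$ be $\vp$-constraint stable and that $Tr$ be $\vp$-constraint sofic. Both are now available: the former is an assumption of the corollary, the latter was just established. Applying it yields a homomorphism $\widetilde\theta\colon G\to\Pi_k\s{n_k}$ with $Tr=Tr_{\widetilde\theta}$ and $\widetilde\theta|_H=\vp$, which is exactly the assertion that $Tr$ is $\vp$-constraint residually finite. Since $Tr$ was arbitrary, this completes the argument.

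There is essentially no obstacle here: the corollary is a formal composition of two lemmas whose hypotheses dovetail, with amenability supplying the sofic step and constraint stability supplying the lifting step. The only point worth a moment's care is verifying that the intermediate object produced by Proposition~\ref{c-amenbale-sofic} genuinely satisfies the input requirements of Proposition~\ref{c-sofic-RF}, namely that the action trace passed along is the \emph{same} $Tr$ throughout (both propositions are stated for a given $Tr$ and modify only the type of morphism witnessing it), so that the equality $Tr=Tr_\theta$ from the first step matches the $\vp$-constraint sofic hypothesis of the second. This is immediate from the statements, so the proof reduces to the single sentence: apply Proposition~\ref{c-amenbale-sofic} and then Proposition~\ref{c-sofic-RF}.
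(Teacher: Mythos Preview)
Your proposal is correct and matches the paper's own argument exactly: the corollary is obtained by applying Proposition~\ref{c-amenbale-sofic} (using amenability) followed by Proposition~\ref{c-sofic-RF} (using $\vp$-constraint stability). The paper itself records no more than the sentence ``These two propositions immediately imply,'' so your expanded verification of how the hypotheses dovetail is a faithful elaboration of the intended one-line proof.
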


We shall prove the converse of this corollary whenever $H$ is a finite subgroup of $G$.

\begin{p}\label{prop:finite conjugated}
Let $H$ be a finite group and $\vp_1,\vp_2\colon H\to\Pi_k\s{{n_k}},$ be two conjugated homomorphisms such that $\lim_{k\to\omega}d_H(\vp_1^k(h),\vp_2^k(h))=0$ for every $h\in H$. Then, there exists $p_k\in \s{{n_k}}$ with $\lim_{k\to\omega}d_H(p_k,1_{n_k})=0$ such that $(p_k)_k\in \Pi_k\s{{n_k}}$ conjugates $\vp_1$ to $\vp_2$.
\end{p}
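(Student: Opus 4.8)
The plan is to build the conjugating element componentwise and to make it the identity wherever the two actions already coincide, so that its Hamming distance to the identity is controlled by the (small) set of disagreement. Fix $k$ and regard $\vp_1^k,\vp_2^k\colon H\to\s{n_k}$ as two actions of the finite group $H$ on $[n_k]$. Set
\[D_k=\{x\in[n_k]:\vp_1^k(h)(x)=\vp_2^k(h)(x)\ \forall h\in H\},\qquad C_k=[n_k]\sm D_k.\]
First I would check that $D_k$ is invariant under both actions (which agree on it): if $x\in D_k$ and $g\in H$, then for every $h$ one has $\vp_1^k(h)(\vp_1^k(g)(x))=\vp_1^k(hg)(x)=\vp_2^k(hg)(x)=\vp_2^k(h)(\vp_2^k(g)(x))$, and since $\vp_1^k(g)(x)=\vp_2^k(g)(x)$ this gives $\vp_1^k(g)(x)\in D_k$; hence $C_k$ is invariant as well. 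Since $H$ is finite, $C_k=\bigcup_{h\in H}\{x:\vp_1^k(h)(x)\neq\vp_2^k(h)(x)\}$, so $|C_k|/n_k\le\sum_{h\in H}d_H(\vp_1^k(h),\vp_2^k(h))$, and the hypothesis $\lim_{k\to\omega}d_H(\vp_1^k(h),\vp_2^k(h))=0$ yields $\lim_{k\to\omega}|C_k|/n_k=0$.

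Next I would match the two actions on each invariant piece. On $D_k$ the restrictions $\vp_1^k\restr{D_k}$ and $\vp_2^k\restr{D_k}$ are literally equal, so the identity on $D_k$ is $H$-equivariant. On $C_k$ I claim $\vp_1^k\restr{C_k}$ and $\vp_2^k\restr{C_k}$ are isomorphic $H$-sets. This is precisely where I use the hypothesis that $\vp_1$ and $\vp_2$ are conjugated in $\Pi_k\s{n_k}$: componentwise this means $\vp_1^k\cong\vp_2^k$ as $H$-sets for every $k$. Writing each $\vp_i^k=\vp_i^k\restr{D_k}\sqcup\vp_i^k\restr{C_k}$ and using that the $D_k$-parts coincide, the cancellation property for finite $H$-sets (the isomorphism type of a finite $H$-set is the multiset of its transitive orbit types $H/K$) gives $\vp_1^k\restr{C_k}\cong\vp_2^k\restr{C_k}$. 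Let $\sigma_k\colon C_k\to C_k$ be an $H$-equivariant bijection realising this isomorphism, i.e.\ $\sigma_k\vp_1^k(h)\sigma_k^{-1}=\vp_2^k(h)$ on $C_k$.

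Finally I would glue the pieces: define $p_k\in\s{n_k}$ by $p_k=\id$ on $D_k$ and $p_k=\sigma_k$ on $C_k$, which is a well-defined permutation because both $D_k$ and $C_k$ are invariant. Checking intertwining block by block ($p_k=\id$ on $D_k$ where the actions agree, and $p_k=\sigma_k$ on $C_k$) gives $p_k\vp_1^k(h)p_k^{-1}=\vp_2^k(h)$ for all $h\in H$, so $(p_k)_k$ conjugates $\vp_1$ to $\vp_2$ (and $p_k^{-1}$, of the same Hamming size, realises the opposite direction if needed). Moreover $p_k$ moves only points of $C_k$, so $d_H(p_k,1_{n_k})\le|C_k|/n_k$, whence $\lim_{k\to\omega}d_H(p_k,1_{n_k})=0$, as required.

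I expect the cancellation step to be the conceptual heart, and the one place where the full strength of the ``conjugated'' hypothesis is needed: the vanishing of the Hamming distances alone only guarantees that the leftover pieces $\vp_1^k\restr{C_k}$ and $\vp_2^k\restr{C_k}$ are small, not that they are isomorphic, and without the global isomorphism $\vp_1^k\cong\vp_2^k$ no identity-close conjugator need exist. The cancellation itself is elementary, being nothing more than unique decomposition of a finite $H$-set into transitive orbits, but it is what allows the disagreement to be repaired within the $o(n_k)$-sized set $C_k$ rather than globally.
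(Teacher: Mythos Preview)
Your proposal is correct and follows essentially the same route as the paper's own proof: define the agreement set (your $D_k$, the paper's $A_k$), observe it is $H$-invariant and has small complement, take $p_k$ to be the identity on $D_k$ and an $H$-equivariant bijection on the complement. The only difference is expository: you spell out the cancellation argument for finite $H$-sets that justifies $\vp_1^k\restr{C_k}\cong\vp_2^k\restr{C_k}$, whereas the paper simply asserts that conjugacy on $[n_k]$ implies conjugacy on the complement; your version is the more careful of the two.
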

\begin{proof}
Let $\ve_k=\max\{d_H(\vp_1^k(h),\vp_2^k(h)):h\in H\}$. Since $H$ is finite, $\lim_{k\to\omega}\ve_k=0$. 

Let $A_k=\{i:\vp_1^k(h)(i)=\vp_2^k(h)(i), \forall h\in H\}\subseteq\{1,\ldots,n_k\}$. Then, $Card(A_k)/n_k\geqslant 1-Card(H)\cdot\ve_k$. Also, $A_k$ is invariant under $\vp_1^k$ and $\vp_2^k$. Because $\vp_1^k$ and $\vp_2^k$ are conjugated, they are conjugated also on the complement $(A_k)^c$. We construct $p_k\in S_{n_k}$ such that $p_k=1_{n_k}$ on $A_k$, and $p_k$ conjugates $\vp_1^k$ and $\vp_2^k$ on $(A_k)^c$. Then, $(p_k)_k$ conjugates $\vp_1$ to $\vp_2$, and $\lim_{k\to\omega}d_H(p_k,1_{n_k})\leqslant \lim_{k\to\omega}Card(H)\cdot\ve_k=0$.
\end{proof}

\begin{ex}
	Proposition~\ref{prop:finite conjugated} does not hold if $H$ is an arbitrary infinite group. Let $a_k,b_k\in\s{k(k-1)}$ be defined by:
	\begin{align*}
		a_k=&(1,2,\ldots,k)(k+1,\ldots, 2k)\cdots (k^2-2k+1,\ldots,k^2-k);\\
		b_k=&(1,2,\ldots,k-1)(k+1,\ldots,2k-1)\cdots (k^2-2k+1,k^2-k-1)(k,2k,\ldots,k^2-k).
	\end{align*}
	By construction, $a_k$ has $k-1$ cycles of length $k$, and $b_k$ has $k$ cycles of length $k-1$. These permutations are different only on inputs of type $mk-1$ and $mk$. Therefore, $d_H(a_k,b_k)=2(k-1)/(k^2-k)=2/k$.
	
	Let us consider $\vp_1,\vp_2\colon \zz\to\Pi_k\s{2k(k-1)}$ defined by $\vp_1(1)=(a_k\oplus b_k)_k$ and $\vp_2(1)=(b_k\oplus a_k)_k$. Clearly, $\vp_1$ is conjugated to $\vp_2$ and $d_H(\vp_1^k(1),\vp_2^k(1))=2/k$, so it tends to $0$ as $k\to\infty$. However, every $p_k\in \s{2k(k-1)}$ that conjugates $\vp_1^k(1)$ to $\vp_2^k(1)$ has the property $p_k(\{1,\ldots,k(k-1)\})=\{k^2-k+1,\ldots,2k(k-1)\}$. Thus, $d_H(p_k,1_{2k(k-1)})=1$.
	
This example shows that if we want to conjugate $a,b\in\s{n}$ by an element $p\in\s{n}$ such that $d_H(p,1_n)$ is small, it does not matter if $a(i)=b(i)$ most of the time. One needs the cycle equality, not the pointwise equality. So, the equality $a(i)=b(i)$ is useful only if $a(j)=b(j)$ for all points $j$ in the same $a$-cycle with $i$. This can be obtained if $a$ and $b$ have cycles of a fixed maximal length, like in Proposition \ref{prop:finite conjugated}.
\end{ex}

\begin{te}\label{te:constraint stable}
Let $H\leqslant G$ be countable groups, $G$ amenable and $H$ finite. Let $\vp\colon H\to\Pi_k\s{{n_k}}$ be a homomorphism. Then $G$ is $\vp$-constraint stable if and only if every $\vp$-constraint action trace is $\vp$-constraint residually finite.
\end{te}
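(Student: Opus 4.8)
The plan is to prove this biconditional by leveraging the machinery already assembled, with the amenability hypothesis giving one direction essentially for free and the finiteness of $H$ doing the heavy lifting in the other. Corollary~\ref{cor:cstab} already supplies the forward implication: if $G$ is $\vp$-constraint stable, then (since $H\leqslant G$ are amenable and $\vp$-constraint stability holds by hypothesis) every $\vp$-constraint action trace is $\vp$-constraint residually finite. So the entire content of the theorem lies in the converse, and this is where I would concentrate the argument.

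For the converse, suppose every $\vp$-constraint action trace is $\vp$-constraint residually finite, and let $\theta\colon G\to\Pi_{k\to\omega}\s{n_k}$ be a $\vp$-constraint sofic morphism, i.e.\ $\theta|_H=Q\circ\vp$. I want to produce a $\vp$-constraint lift. First, the associated action trace $Tr_\theta$ is a $\vp$-constraint action trace (it agrees with $Tr_\vp$ on $\pp_f(H)$ because $\theta|_H=Q\circ\vp$), so by hypothesis there exists a homomorphism $\alpha\colon G\to\Pi_k\s{n_k}$ with $Tr_\alpha=Tr_\theta$ and $\alpha|_H=\vp$. Composing with $Q$, the morphism $Q\circ\alpha$ and $\theta$ are two sofic morphisms of the amenable group $G$ with the same action trace, so by Theorem~\ref{thm:generalised Elek} there is an element $p=(p_k)_k\in\Pi_{k\to\omega}\s{n_k}$ conjugating $Q\circ\alpha$ to $\theta$, that is $\theta(g)=p\,(Q\circ\alpha)(g)\,p^{-1}$ for all $g\in G$. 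Lifting $p$ arbitrarily to $(\widetilde p_k)_k\in\Pi_k\s{n_k}$ gives a homomorphism $g\mapsto (\widetilde p_k)_k\,\alpha(g)\,(\widetilde p_k)_k^{-1}$ into $\Pi_k\s{n_k}$ that is a lift of $\theta$; the only defect is that its restriction to $H$ need not equal $\vp$ on the nose — it equals a conjugate of $\vp$ that is merely $\omega$-close to $\vp$.

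The finiteness of $H$ is precisely what lets me repair this defect, and I expect this to be the main obstacle — it is the one place where the hypothesis on $H$ is genuinely used. Set $\beta=(\widetilde p_k)_k\,\alpha\,(\widetilde p_k)_k^{-1}\colon G\to\Pi_k\s{n_k}$. On the subgroup $H$, both $\beta|_H$ and $\vp$ are homomorphisms $H\to\Pi_k\s{n_k}$; they are conjugated (coordinatewise, since conjugation by a single element of $\Pi_k\s{n_k}$ relates $\beta|_H$ and $\vp$ as $\alpha|_H=\vp$ by construction) and, because $Q\circ\beta|_H=\theta|_H=Q\circ\vp$, we have $\lim_{k\to\omega}d_H(\beta^k(h),\vp^k(h))=0$ for every $h\in H$. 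This is exactly the hypothesis of Proposition~\ref{prop:finite conjugated}, which applies because $H$ is finite. It produces $(q_k)_k\in\Pi_k\s{n_k}$ with $\lim_{k\to\omega}d_H(q_k,1_{n_k})=0$ conjugating $\vp$ to $\beta|_H$, equivalently $\beta|_H$ to $\vp$.

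Finally I would assemble the corrected lift. Put $\widetilde\theta(g)=(q_k)_k^{-1}\,\beta(g)\,(q_k)_k$. Since $(q_k)_k$ has $\lim_{k\to\omega}d_H(q_k,1_{n_k})=0$, it lies in the kernel of $Q$, so $Q\circ\widetilde\theta=Q\circ\beta=\theta$, confirming $\widetilde\theta$ is still a lift of $\theta$. And on $H$ we now have $\widetilde\theta|_H=(q_k)_k^{-1}\,(\beta|_H)\,(q_k)_k=\vp$ by the choice of $(q_k)_k$. Hence $\widetilde\theta$ is a genuine $\vp$-constraint lift of $\theta$, showing $G$ is $\vp$-constraint stable and completing the converse. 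The whole argument is the constraint analogue of the proof of Theorem~\ref{thm:A}, with Proposition~\ref{prop:finite conjugated} inserted as the new ingredient that handles the boundary condition on $H$.
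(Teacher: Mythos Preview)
Your proof is correct and follows essentially the same approach as the paper's: the forward direction via Corollary~\ref{cor:cstab}, and the converse by using the hypothesis to produce a homomorphism $\alpha$ (the paper's $\Phi$) with $\alpha|_H=\vp$ and $Tr_\alpha=Tr_\theta$, conjugating via Theorem~\ref{thm:generalised Elek}, and then correcting the restriction to $H$ using Proposition~\ref{prop:finite conjugated}. Your exposition is in fact slightly more explicit than the paper's in distinguishing the conjugator $p\in\Pi_{k\to\omega}\s{n_k}$ from its lift $(\widetilde p_k)_k\in\Pi_k\s{n_k}$.
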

\begin{proof}
The ``only if'' direction follows by Corollary~\ref{cor:cstab}. For the ``if'' direction, let $\theta\colon G\to\Pi_{k\to\omega}\s{{n_k}}$ be a sofic morphism such that $Q\circ\vp=\theta|_H$. Then $Tr_\theta$ is a $\vp$-constraint action trace. By hypothesis, $Tr_\theta$ is $\vp$-constraint residually finite. Then, there exists a homomorphism $\Phi\colon G\to\Pi_kS_{n_k}$ such that $\Phi|_H=\vp$ and $Tr_\Phi=Tr_\theta$. 

By Theorem \ref{thm:generalised Elek}, $\theta$ and $Q\circ\Phi$ are conjugated.  Let $p\in\Pi_k\s{{n_k}}$ be such that $Q\circ(p\cdot\Phi\cdot p^{-1})=\theta$.
So $Q\circ(p\cdot\Phi|_H\cdot p^{-1})=\theta|_H=Q\circ\vp$. By Proposition \ref{prop:finite conjugated}, applied to $p\cdot\Phi|_H\cdot p^{-1}$ and $\vp$, we get $q\in\Pi_k\s{{n_k}}$, $Q\circ q=1_\omega$ and $qp\cdot\Phi|_H\cdot p^{-1}q^{-1}=\vp$. Then $qp\cdot\Phi\cdot(qp)^{-1}$ is the required lift of $\theta$ as $(qp\cdot\Phi\cdot(qp)^{-1})|_H=\vp$ and $Q\circ(qp\cdot\Phi\cdot(qp)^{-1})=Q\circ(p\cdot\Phi\cdot p^{-1})=\theta$. Thus, $G$ is $\vp$ -constraint stable.\end{proof}

We do not know whether or not the ``if'' direction of this theorem holds for an arbitrary infinite subgroup $H$. The current proof is not sufficient because Proposition \ref{prop:finite conjugated} fails for some infinite $H$. However, it is still possible for the theorem to hold. In this scenario, one has to choose a specific $\Phi\colon G\to\Pi_kS_{n_k}$ that witnesses the constraint residually finiteness property of $Tr_\theta$, at the beginning of the proof. We consider that this scenario is unlikely. 

The notions of  constraint metric approximations and constraint stability that we have introduced in~\cite{Ar-Pa3}, give a rigorous framework to the study of arbitrary metric approximations and their stability of group-theoretical constructions such as free amalgamated products and HNN-extensions.
The next theorem illustrates this general approach in the case of stability in permutations. Our goal is to apply this theorem and thus provide new examples of groups stable in permutations.

\begin{te}\label{th:amalgamation}
Let $G_1$ and $G_2$ be countable groups with a common subgroup $H$. Suppose that $G_1$ is stable in permutations and $G_2$ is $\vp$-constraint stable, for every homomorphism $\vp\colon H\to\Pi_k\s{n_k}$. Then $G_1*_HG_2$ is stable in permutations.
\end{te}
\begin{proof}
Let $\theta\colon G_1*_HG_2\to\Pi_{k\to\omega}\s{n_k}$ be a homomorphism. Since $G_1$ is stable in permutations and $G_1$ injects into $G_1*_HG_2$, there exists a homomorphism $\psi_1\colon G_1\to\Pi_k\s{n_k}$ such that $Q\circ\psi_1=\theta|_{G_1}$. Let $\vp=\psi_1|_H$. By hypothesis, $G_2$ is $\vp$-constraint stable. It follows that there exists a homomorphism $\psi_2\colon G_2\to\Pi_k\s{n_k}$ such that $Q\circ\psi_2=\theta|_{G_2}$ and $\psi_2|_H=\vp$.

Now, $\psi_1|_H=\psi_2|_H$, so, by the universal property of free amalgamated products, we can construct the homomorphism $\psi_1*_H\psi_2\colon G_1*_HG_2\to\Pi_k\s{n_k}$. Moreover, $Q\circ(\psi_1*_H\psi_2)=\theta$, so $\theta$ is liftable.
\end{proof}

Theorem~\ref{th:amalgamation} remains true for arbitrary metric approximations (not necessarily  by permutations), under suitable variants of Definition~\ref{de:clift}. The results of Section~\ref{sec:action traces} and Section~\ref{sec:constraint} have natural analogues for arbitrary metric approximations.
In contrast, the use of Loeb measure space $(X_\omega, \mu_\omega)$ underlying the arguments of the next section makes the sofic approximations special.

%%%%%%%%%%%
\section{Homomorphism extension property}\label{sec:examples constraint}

In this section, we use Theorem \ref{te:constraint stable} in order to provide examples of groups that are $\vp$-constraint stable for every homomorphism $\vp$ of a subgroup. First we give some preliminaries. 

\subsection{Homomorphisms of finite groups}

\begin{de}[Coset multiplicity, $\varphi$]\label{def:cm}
Let $\vp\colon H\to \s{n}$ be a homomorphism and $N\leqslant H$ be a subgroup. Then the \emph{coset  multiplicity} $r(\vp,N)$ is the multiplicity of $H\curvearrowright H/N$ in $\vp$, divided by $n$.

For $\vp\colon H\to\Pi_k\s{{n_k}}$, $\vp=(\vp_k)_k$, we define $r(\vp,N)=\lim_{k\to\omega}r(\vp_k,N)$.
\end{de}

In other words, the coset multiplicity $r(\vp, N)$ is the normalised number of orbits with the same conjugacy class of stabilisers, which we denote by $[N]_H$.

The following invariant will be used to define the coset multiplicity for an arbitrary probability measure preserving action $H\acts (X, \mu)$. 

\begin{de}[Benjamini--Schramm statistics]\label{def:BS}
The  \emph{Benjamini--Schramm statistics} of a probability measure preserving  action   $\alpha\colon G\acts \left(X,\mu\right)$ are defined by:
\[S_\alpha(A,B)=\mu(\{x\in X:\alpha(g)(x)=x\ \forall g\in A;\ \alpha(g)(x)\neq x\ \forall g\in B\}),\]
where $A,B\in\pp_f(G)$.
\end{de}

The next result is added for completeness. We write $S$ and $Tr$ without index, for readability.

\begin{p}\label{inclusion-exclusion}
Given an action $\alpha\colon G\acts \left(X,\mu\right)$ as above, the associated numbers $S$ are determined by the numbers $Tr$, and vice versa.
\end{p}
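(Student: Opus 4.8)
The plan is to write down two explicit finite formulas, one expressing each family in terms of the other, both built from the single family of fixed-point sets. For $g\in G$ put $\mathrm{Fix}(g)=\{x\in X:\alpha(g)(x)=x\}$, and for $A\in\pp_f(G)$ set $F_A=\bigcap_{g\in A}\mathrm{Fix}(g)$, so that $Tr(A)=\mu(F_A)$ and $S(A,B)=\mu\bigl(F_A\cap\bigcap_{g\in B}(X\setminus\mathrm{Fix}(g))\bigr)$. Everything then reduces to measure-theoretic bookkeeping about these sets.

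First I would recover $S$ from $Tr$. Fixing $A,B\in\pp_f(G)$ and applying the (complementary) inclusion--exclusion principle to the measure of $F_A$ with the complements $X\setminus\mathrm{Fix}(g)$, $g\in B$, removed, together with the identity $F_A\cap\bigcap_{g\in B'}\mathrm{Fix}(g)=F_{A\cup B'}$ valid for every $B'\subseteq B$, yields
$$S(A,B)=\sum_{B'\subseteq B}(-1)^{|B'|}\,Tr(A\cup B').$$
This exhibits the numbers $S$ as explicit signed sums of the numbers $Tr$.

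For the converse I would argue by partitioning. Fix $A\in\pp_f(G)$ and let $D\in\pp_f(G)$ be any finite set. Each point $x\in F_A$ determines its moving part $B_x=\{g\in D:\alpha(g)(x)\neq x\}\subseteq D$, and the fibres of the map $x\mapsto B_x$ partition $F_A$ into the sets whose measures are exactly $S\bigl(A\cup(D\setminus B),B\bigr)$ for $B\subseteq D$. Taking total measure gives
$$Tr(A)=\sum_{B\subseteq D}S\bigl(A\cup(D\setminus B),\,B\bigr),$$
so the numbers $Tr$ are determined by the numbers $S$. Equivalently, one may invert the first display directly by M\"obius inversion on the Boolean lattice of subsets of $B$, since the two displays are M\"obius/inclusion--exclusion duals once $A$ is held fixed.

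I do not expect a genuine obstacle: both identities are elementary, and their substance is just recognising the two families as related by inclusion--exclusion over the finite set $B$. The only point needing a word of care is the degenerate case $A\cap B\neq\es$, where $S(A,B)=0$ because no point can be simultaneously fixed and moved by a single $g$; one checks the signed sum in the first display indeed collapses to $0$ there, consistent with the set-theoretic vanishing, so the formulas remain valid without any side hypothesis on $A$ and $B$.
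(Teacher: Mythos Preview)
Your proof is correct, and the first direction---the inclusion--exclusion formula $S(A,B)=\sum_{B'\subseteq B}(-1)^{|B'|}\,Tr(A\cup B')$---is exactly what the paper does.

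For the converse, however, you work much harder than necessary. The paper simply observes that $Tr_\alpha(A)=S_\alpha(A,\emptyset)$, which is immediate from the definitions: when $B=\emptyset$ the condition ``$\alpha(g)(x)\neq x$ for all $g\in B$'' is vacuous, so $S(A,\emptyset)=\mu(F_A)=Tr(A)$. Your partition argument over an auxiliary finite set $D$ and the appeal to M\"obius inversion are valid, but they recover $Tr$ from many values of $S$ when a single one already suffices. The paper's route makes it transparent that $Tr$ is literally a specialisation of $S$, which is the cleanest way to see the ``vice versa''.
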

\begin{proof}
This is straightforward, by  the inclusion-exclusion principle. For example, $S(\{g_1,g_2\}, \{h\})=Tr(\{g_1,g_2\})-Tr(\{g_1,g_2,h\})$ and $S(\{g\}, \{h_1,h_2,h_3\})=Tr(\{g\})-Tr(\{g,h_1\})-Tr(\{g,h_2\})-Tr(\{g,h_3\})+ Tr(\{g,h_1,h_2\})+Tr(\{g,h_2,h_3\})+Tr(\{g,h_3,h_1\})-Tr(\{g,h_1,h_2,h_3\})$. In general:
\[S_\alpha(A,B)=\sum_{V\subseteq B}(-1)^{|V|}\cdot Tr_\alpha(A\cup V),\] 
for every pair  $A,B\in\pp_f(G)$. Conversely, $Tr_\alpha(A)=S_\alpha(A,\emptyset)$.
\end{proof}

\begin{ob}\label{ob:cosetM} If $H$ is finite, then
\begin{enumerate}[(i)]
\item $\sum_{[N]_H}r(\vp,N)\cdot |H/N|=1$, where the sum is over the conjugacy classes  of subgroups $N\leqslant H$;\label{obi:cosetM}
\item $r(\vp,N)=S_\vp(N,N^c)/ [N_H(N):N]$, where $N^c$ denotes the complement $H\setminus N$, the definition of $S_\vp(N,N^c)$ uses the action
$H\acts (\{1,\ldots,n\}, \mu_n)$ induced by a given $\vp\colon H\to \s{n}$, and $N_H(N)$ denotes the normaliser of $N$ in $H$.
\end{enumerate}
\end{ob}

This yields the following natural definition of  the coset multiplicity for an arbitrary $H\acts (X, \mu)$.

\begin{de}[Coset multiplicity, $\alpha$]
	Let $\alpha\colon H\to Aut(X,\mu)$ be a probability measure preserving  action of a finite group $H$. For a subgroup $N\leqslant H$, we define the \emph{coset multiplicity} $r(\alpha,N)=S_\alpha(N,N^c)/[N_H(N):N]$.
\end{de}

\begin{ob}\label{ob:f}
	Let $H$ be a finite group, $\alpha\colon H\to Aut(X,\mu)$, and $\vp\colon H\to\Pi_k\s{{n_k}}$. Then, $r(\alpha,N)=r(\vp,N)$ for each subgroup $N\leqslant H$ if and only if $Tr_\alpha=Tr_\vp$.
\end{ob}

\begin{p}
Let $\vp,\psi\colon H\to \s{n}$ be two homomorphisms of a finite group $H$. Then $\vp$ and $\psi$ are conjugated if and only if $r(\vp,N)=r(\psi,N)$ for each subgroup $N\leqslant H$.
\end{p}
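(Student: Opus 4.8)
The plan is to reformulate the statement in the language of finite $H$-sets and then invoke the classification of transitive actions. First I would observe that a homomorphism $\vp\colon H\to\s{n}$ is exactly the datum of an action $H\acts[n]$ on the set $[n]=\{1,\ldots,n\}$, and that two homomorphisms $\vp,\psi\colon H\to\s{n}$ are conjugated precisely when the associated $H$-sets are isomorphic: a conjugating permutation $p\in\s{n}$ with $\psi(h)=p\,\vp(h)\,p^{-1}$ for every $h\in H$ is the same thing as an $H$-equivariant bijection $[n]\to[n]$. Thus the assertion to prove is that two actions of the finite group $H$ on a set of $n$ points are isomorphic if and only if they share all coset multiplicities $r(\cdot,N)$, $N\leqslant H$.

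Next I would recall the structure theory of finite $H$-sets. Any action $\vp\colon H\acts[n]$ decomposes uniquely, up to reordering, into its orbits; each orbit is transitive, and a transitive $H$-set is isomorphic to the coset space $H/N$, where $N$ is the stabiliser of any one of its points, determined up to conjugacy. Moreover $H/N\cong H/N'$ as $H$-sets if and only if $N$ and $N'$ are conjugate in $H$. Consequently the isomorphism type of $\vp$ is completely encoded by the multiplicities $m(\vp,N)$, the number of orbits isomorphic to $H/N$, each depending only on the conjugacy class of $N$; and by the definition of coset multiplicity one has $r(\vp,N)=m(\vp,N)/n$. The same remark shows that the condition ``$r(\vp,N)=r(\psi,N)$ for every subgroup $N$'' is in effect a condition over conjugacy classes of subgroups, which is harmless since $r(\vp,\cdot)$ is constant on each conjugacy class (this is also the content of the normalisation in Observation~\ref{ob:cosetM}).

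With this dictionary the two directions are immediate. If $\vp$ and $\psi$ are conjugated, the conjugating permutation is an $H$-set isomorphism, and isomorphic $H$-sets have the same number of orbits of each isomorphism type, so $m(\vp,N)=m(\psi,N)$ and hence $r(\vp,N)=r(\psi,N)$ for all $N$. Conversely, if $r(\vp,N)=r(\psi,N)$ for all $N$, then, both sides being the respective multiplicity divided by the \emph{same} integer $n$, I obtain $m(\vp,N)=m(\psi,N)$ for every $N$, i.e.\ the two actions carry identical orbit-decomposition data; by the structure theorem they are isomorphic as $H$-sets, and an isomorphism furnishes the conjugating permutation. The only genuine ingredient is the classification of transitive finite $H$-sets by conjugacy classes of point stabilisers, together with the uniqueness of the orbit decomposition; I would expect this classification/uniqueness step to be the single place requiring care, and it is entirely standard. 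The finiteness of $H$ enters only to guarantee that the multiplicities are finite and that the coset-multiplicity bookkeeping of Observation~\ref{ob:cosetM} is available.
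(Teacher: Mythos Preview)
Your proof is correct and follows essentially the same approach as the paper's: both identify the isomorphism class of the $H$-set $[n]$ with the list of multiplicities of the transitive pieces $H/N$, observe that these multiplicities are exactly $n\cdot r(\vp,N)$, and conclude that equal coset multiplicities yield an $H$-equivariant bijection, i.e.\ a conjugating permutation. The paper's write-up is terser (invoking Observation~\ref{ob:cosetM}(ii) for the forward direction rather than spelling out the orbit decomposition), but the underlying argument is the same.
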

\begin{proof}
If $\vp,\psi$ are conjugated, then $S_\vp(N,N^c)=S_\psi(N,N^c)$ for each subgroup $N\leqslant H$. By Observation~\ref{ob:f}, this implies $r(\vp,N)=r(\psi,N)$ for each subgroup $N\leqslant H$. For the reverse statement, $r(\vp,N)=r(\psi,N)$ implies that the multiplicity of $H\curvearrowright H/N$ is the same in both $\vp$ and $\psi$. This allows the construction of a permutation that conjugates $\vp$ into $\psi$.
\end{proof}

\begin{de}[Homomorphism order]
Let $\vp\colon H\to\s{m}$ and $\psi\colon H\to \s{n}$ be two homomorphisms. We write $\vp\preccurlyeq\psi$ whenever $r(\vp,N)\cdot m\leqslant r(\psi,N)\cdot n$ for each  subgroup $N\leqslant H$.
\end{de}

\begin{lemma}\label{lem:aux}
	Let $\{a_i^k\}_{i,\, k\in\nz^*}$ and $\{b_k\}_{k\in\nz^*}$ be sequences of natural numbers. Assume that $\sum_i a_i^k\leqslant b_k$ for all $k\in\nz$ and that $\sum_i\lim_{k\to\omega}\frac{a_i^k}{b_k}=1$. Then $\lim_{k\to\omega}\frac{\sum_ia_i^k}{b_k}=1$.
\end{lemma}
\begin{proof}
	Let $\ve>0$ and choose $j\in\nz$ such that $\sum_{i=1}^j\lim_{k\to\omega}\frac{a_i^k}{b_k}>1-\ve$. Then:
	\[\lim_{k\to\omega}\frac{\sum_ia_i^k}{b_k}\geqslant\lim_{k\to\omega}\frac{\sum_{i=1}^ja_i^k}{b_k}=\sum_{i=1}^j\lim_{k\to\omega}\frac{a_i^k}{b_k}>1-\ve.\]
\end{proof}

\begin{p}\label{p:split}
	Let $\vp\colon H\to\Pi_k\s{n_k}$ be a homomorphism of a finite group $H$, and $\alpha\colon H\to Aut(X,\mu)$ be a probability measure preserving action with $Tr_\alpha=Tr_\vp$. Let $X=\sqcup_iX_i$, with $X_i$ invariant under $\alpha$. Then, there exists $\{\vp_i\colon H\to\Pi_k\s{n_k^i}\}_i$ such that:
	\begin{enumerate}
		\item $\vp=\oplus_i\vp_i$ and,  in particular, $\sum_in_k^i=n_k$;\label{p1:split}
		\item $\lim_{k\to\omega}\frac{n_k^i}{n_k}=\mu(X_i)$;\label{p2:split}
		\item The action trace of $\alpha$ restricted to $X_i$ is equal to $Tr_{\vp_i}$.\label{p3:split}
	\end{enumerate}
\end{p}
\begin{proof}
	To ease notation, let $\alpha_i=\alpha|_{X_i}$. Then, by definitions, the hypothesis, and Observation~\ref{ob:f}, $\sum_i \mu(X_i)r(\alpha_i,N)=r(\alpha,N)=r(\vp,N)$. Observe that $\sum_{[N]_H}r(\alpha_i,N)\cdot |H/N|=1$, cf. Observation~\ref{ob:cosetM} (\ref{obi:cosetM}). For $N\leqslant H$, if $r(\vp,N)=0$, we set $r_{i,N}^k=0$ and if $r(\vp,N)\neq 0$, we set:
	\[r_{i,N}^k=\big\lfloor r(\alpha_i,N)\cdot\mu(X_i)\cdot n_k\cdot \min\big\{\frac{r(\vp_k,N)}{r(\vp,N)},1\big\}\big\rfloor.\] Let $\vp_i^k=\oplus_{[N]_H}(H\acts H/N)\otimes Id_{r_{i,N}^k}$. So, $\vp_i^k\colon H\to\s{m_i^k}$, where $m_i^k=\sum_{[N]_H} r_{i,N}^k\cdot|H/N|$. Then $m_i^k\leqslant\sum_{[N]_H} r(\alpha_i,N)\mu(X_i)n_k\cdot|H/N|=\mu(X_i)\cdot n_k$. Let $\ve>0$ and choose a subset of natural numbers  $F_\ve\in\omega$ such that $r(\vp_k,N)/r(\vp,N)>1-\ve$ for every $k\in F_\ve$ and $N\leqslant H$, by Definition~\ref{def:cm}. Then  $m_i^k>\sum_{[N]_H} \big(r(\alpha_i,N)\mu(X_i)n_k(1-\ve)-1\big)\cdot|H/N|=\mu(X_i)\cdot n_k(1-\ve)-\sum_{[N]_H}|H/N|$. Therefore, $\lim_{k\to\omega}m_i^k/n_k=\mu(X_i)$. Let $m_k=\sum_im_i^k$. By Lemma~\ref{lem:aux}, $\lim_{k\to\omega}m_k/n_k=1$.
	
	Let $\vp_i=\Pi_k\vp_i^k$. Then, setting $n_k^i=m_i^k$, we obtain assertion (\ref{p2:split}). Below, we modify the value of $m_1^k$ (and hence of $n_k^1$) but  (\ref{p2:split}) remains true.

	Let us show that $Tr_{\vp_i}=Tr_{\alpha_i}$. For $N\leqslant H$, $r(\vp_i^k,N)=r_{i,N}^k/m_i^k$. Then $r(\vp_i^k,N)\leqslant r(\alpha_i,N)\mu(X_i)n_k/m_i^k$. As $\lim_{k\to\omega}m_i^k/n_k=\mu(X_i)$, it follows that $\lim_{k\to\omega} r(\vp_i^k,N)\leqslant r(\alpha_i,N)$. For the opposite inequality,  we again choose $F_\ve\in\omega$ such that $r(\vp_k,N)/r(\vp,N)>1-\ve$ for every $k\in F_\ve$  and $N\leqslant H$. Then $r(\vp_i^k,N)> (r(\alpha_i,N)\mu(X_i)n_k(1-\ve)-1)/m_i^k$, so $\lim_{k\to\omega} r(\vp_i^k,N)\geqslant r(\alpha_i,N)$. It follows that $r(\vp_i,N)=r(\alpha_i,N)$, and hence, $Tr_{\vp_i}=Tr_{\alpha_i}$ as required in assertion (\ref{p3:split}).
	
	We now show that $\oplus_i\vp_i^k\preccurlyeq\vp_k$. We have:
	\[r(\oplus_i\vp_i^k,N)\cdot m_k=\sum_ir_{i,N}^k\leqslant\sum_ir(\alpha_i,N)\mu(X_i)n_k\frac{r(\vp_k,N)}{r(\vp,N)}=r(\vp,N)n_k\frac{r(\vp_k,N)}{r(\vp,N)}=r(\vp_k,N)\cdot n_k.\]
	Let $\psi_k=\vp_k\ominus(\oplus_i\vp_i^k)$, where $\ominus$ denotes the subtraction of matrices. It does exist as $\oplus_i\vp_i^k\preccurlyeq\vp_k$ and it is the class of representations such that $\vp_k= (\oplus_i\vp_i^k) \oplus (\vp_k\ominus(\oplus_i\vp_i^k))$ .
	We replace $\vp_1^k$ in $\vp_1=\Pi_k\vp_1^k$ with $\vp_1^k\oplus\psi_k$. This gives assertion  (\ref{p1:split}). Since $\lim_{k\to\omega}m_k/n_k=1$, this replacement does not change $Tr_{\vp_1}$ or the value of $\lim_{k\to\omega}m_1^k/n_k$. Hence, assertions (\ref{p2:split}) and (\ref{p3:split}) remain true. 
\end{proof}

\subsection{Examples of groups with constraint residually finite action traces}

\begin{de}[Homomorphism extension property]\label{def:Ext} 
A pair of countable groups $H\leqslant G$ is said to be with \emph{extension property} if, for every $n\in\mathbb N$ and  for every homomorphism $\vp\colon H\to\s{n}$, there exists a homomorphism $\bar\vp\colon G\to \s{n}$ such that $\bar\vp|_H=\vp.$
\end{de}

Clearly, if $K\leqslant H$ and $H\leqslant G$ are with extension property, then $K\leqslant G$ is with extension property.

\begin{de}[Retract]
A subgroup $H$ in a group $G$ is a \emph{retract} of $G$ if there exists a homomorphism $\gamma\colon G\to H$ such that $\gamma|_H=id_H$. \end{de}

The next result is well known. We omit the proof as it is elementary.

\begin{lemma}\label{lem:retract} 
Let $H$ be a subgroup of a group $G$. The following are equivalent.
\begin{enumerate}[(i)]\label{lem:rte}
\item $H$ is a retract of $G$.
\item There exists $K\unlhd G$ such that $K\cap H=\{1_G\}$ and $G= K\rtimes H.$\label{lem:retract_ii}
\item For every homomorphism $\vp\colon H\to L$ to an arbitrary group $L$, there exists a homomorphism $\bar\vp\colon G\to L$ with $\bar\vp|_H=\vp.$\label{rte:iii}
\end{enumerate}
\end{lemma}

Thus, if $H$ is a retract of $G$, then $H\leqslant G$ is with extension property.

\begin{rem}\label{rem:nonR}
There are examples of pairs $H\leqslant G$ with extension property, where $H$ is not necessarily a retract. For instance, $\zz_p\leqslant S_p$, with a prime $p$, is with extension property. The cyclic subgroup is not a retract in $S_p$ as it has no normal complement, see Lemma \ref{lem:retract} (\ref{lem:retract_ii}).

Given a pair $H\leqslant G$, one can ask for an algorithm to decide whether or not it is with extension property. This  question was recently addressed  in complexity theory, in relation to list-decoding homomorphism codes~\cites{Wuu, Babai}.
\end{rem}

\begin{de}[Almost normal subgroup] A subgroup $L$ of a group $G$ is \emph{almost normal} if $L$ has only a finite number of conjugates in $G$, that is, if 
 $[G:N_G(L)]<\infty.$
\end{de}

It follows by definitions that being almost normal is preserved under taking homomorphic images and restrictions to a subgroup. It is well-known that every subgroup of a group $G$ is almost normal if and only if the quotient group by the center $G/Z(G)$  is finite~\cite{MR72137} if and only if every abelian subgroup of $G$ is almost normal~\cite{MR0106242}. 

\begin{de}[Profinitely closed]
A subgroup $L$ of a group $G$ is \emph{profinitely closed} if there is a sequence $(K_i)_{i=1}^\infty$ of finite index subgroups $K_i\leqslant G$ such that $L=\cap_{i=1}^\infty K_i.$
\end{de}

We are now ready for the main result of this section. It generalises~\cite{BLT}*{Proposition 8.1}. 

\begin{p}\label{prop:cRF}
Let $H\leqslant G$ be countable groups with extension property, $H$ be finite. Let $\vp\colon H\to\Pi_k\s{{n_k}}$ be a homomorphism. Suppose that $Sub(G)$, the set  of subgroups of $G$, is countable and that every almost normal subgroup of $G$ is profinitely closed. Then every $\vp$-constraint  action trace is $\vp$-constraint residually finite. 
\end{p}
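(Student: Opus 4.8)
The plan is to prove this by directly constructing, from a given $\vp$-constraint action trace $Tr$, a homomorphism $\theta\colon G\to\Pi_k\s{n_k}$ witnessing $\vp$-constraint residual finiteness. Since $Tr$ is an action trace, fix a probability measure preserving action $\alpha\colon G\acts(X,\mu)$ with $Tr=Tr_\alpha$, and let $E_\alpha$ be its orbit equivalence relation. The key structural input should be the decomposition of the action into its constituents governed by point stabilizers: for $\mu$-almost every $x\in X$, the stabilizer $\mathrm{Stab}_\alpha(x)$ is an element of $Sub(G)$, and because $Sub(G)$ is assumed countable, the map $x\mapsto\mathrm{Stab}_\alpha(x)$ partitions $X$ into countably many measurable pieces indexed by subgroups $L\leqslant G$, each piece carrying the transitive-orbit-type action $G\acts G/L$ with some weight $w_L=\mu(\{x:\mathrm{Stab}_\alpha(x)=L\})$, where $\sum_L w_L\,\ $ accounts for the total measure. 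This reduces the problem to approximating each orbit type $G\acts G/L$ by permutation actions that (a) are residually finite in the action-trace sense and (b) restrict correctly to $H$ so that the whole assembly is $\vp$-constraint.

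First I would treat a single orbit type $G\acts G/L$. The hypothesis that $L$ is \emph{almost normal} and hence \emph{profinitely closed} is what makes this work: writing $L=\cap_i K_i$ with each $K_i\leqslant G$ of finite index, the finite-index quotient actions $G\acts G/K_i$ are genuine finite permutation actions approximating $G\acts G/L$ on finite subsets, so each contributes a residually finite action trace. The almost normality is needed to control the relevant stabilizer structure and to ensure the profinitely closed hypothesis is actually available (since being almost normal is preserved under the operations in play). I would combine these finite quotients across the countably many orbit types, weighting them according to $w_L$ via the $\limsup_\omega$ along the sequence $\{n_k\}$, so that the resulting $\theta=(\theta_k)_k\colon G\to\Pi_k\s{n_k}$ satisfies $Tr_\theta=Tr_\alpha=Tr$ by Proposition~\ref{inclusion-exclusion}, matching all the $S_\alpha(A,B)$ statistics, equivalently all the $Tr_\alpha(A)$ values.

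The remaining point is the constraint: I must arrange $\theta|_H=\vp$, not merely $Tr_{\theta|_H}=Tr_\vp$. Here the extension property of the pair $H\leqslant G$ is the crucial ingredient. Since $Tr$ is $\vp$-constraint, the action trace of $\alpha|_H$ coincides with $Tr_\vp$, and because $H$ is finite its isomorphism type of permutation representation is determined by the coset multiplicities $r(\cdot,N)$; so on each finite factor I can match the $H$-action to $\vp_k$ up to conjugacy, and then use the extension property to extend each finite-index quotient homomorphism $G\to\s{m}$ so that it restricts to the prescribed $H$-action. Applying Proposition~\ref{prop:finite conjugated} (valid since $H$ is finite) lets me correct the restriction $\theta|_H$ to equal $\vp$ exactly while changing $\theta$ only by an element projecting to $1_\omega$, which does not affect $Tr_\theta$.

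The main obstacle I anticipate is the simultaneous bookkeeping in the diagonal construction: I must produce, on each finite index $k$, a single permutation representation on $\{1,\dots,n_k\}$ that interleaves the correct proportions of all the orbit types $G/L$ and their finite-index approximants $G/K_i$, so that \emph{both} the global statistics converge along $\omega$ to $Tr_\alpha$ \emph{and} the restriction to $H$ is exactly $\vp_k$ on the nose. Reconciling the weights $w_L$ (which are limits along $\omega$) with the integer cardinality constraints on each $\s{n_k}$, while keeping the $H$-restriction pinned down via the extension property and then corrected by Proposition~\ref{prop:finite conjugated}, is the delicate step; everything else is an assembly of tools already established, namely the amenable uniqueness result Theorem~\ref{thm:generalised Elek} is \emph{not} needed here, only the explicit finite approximations afforded by profinite closedness together with the extension property.
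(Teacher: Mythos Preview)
Your outline follows the paper's argument: decompose $X$ by point stabilizers using the countability of $Sub(G)$, observe that each stabilizer $L$ of positive mass is almost normal (since $g\cdot\mathrm{Stab}^{-1}(L)=\mathrm{Stab}^{-1}(gLg^{-1})$ forces finitely many conjugates) and hence profinitely closed, approximate $G\acts G/L$ by finite quotients $G\acts G/K_i$, and assemble these with the correct weights along $\omega$.

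Two small corrections worth recording. First, the extension property is not used to modify the finite-quotient $G$-actions themselves; it is used to fill in the \emph{leftover}. After packing $s_k$ copies of the finite quotient $\psi_m$ into $\{1,\dots,n_k\}$ (with $s_k$ chosen so that the $H$-coset multiplicities of $\psi_m\otimes 1_{s_k}$ do not exceed those of $\vp_k$, which is possible precisely because one first arranges $r(\psi_m,T)=r(\vp,T)$ for every $T\leqslant H$), there remain $n_k-s_k|G/K_i|$ points carrying the $H$-action $\vp_k\ominus(\psi_m\otimes 1_{s_k})|_H$, and the extension property supplies a $G$-action $\eta_k$ over that piece. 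The direct sum then has $H$-restriction conjugate to $\vp_k$ in $\s{n_k}$ for each $k$ in a set of $\omega$. Second, your appeal to Proposition~\ref{prop:finite conjugated} is superfluous and its hypothesis is not available: that proposition needs the two $H$-maps to already be close in Hamming distance, which you have not arranged. What you actually have is $\theta_k|_H$ conjugate to $\vp_k$ inside $\s{n_k}$, so a plain conjugation of $\theta_k$ by an element of $\s{n_k}$ yields $\theta|_H=\vp$ on the nose while preserving $Tr_\theta$.
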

\begin{proof}
Let $Tr$ be a $\vp$-constraint  action trace and choose $\alpha\colon G\to Aut(X,\mu)$ a measure preserving action such that $Tr_\alpha=Tr$. Since $Sub(G)$ is countable, we have $X=\sqcup_{N\in Sub(G)}Stab^{-1}(N)$ and $\sum_{N\in Sub(G)}\mu(Stab^{-1}(N))=1$, where $Stab\colon X\to Sub(G), x\mapsto stab_\alpha(x)$, and $stab_\alpha(x)$ is the stabiliser subgroup.

Let $N\in Sub(G)$ be such that $\mu(Stab^{-1}(N))>0$. Since $\alpha(g)\big(Stab^{-1}(N)\big)=Stab^{-1}(g^{-1}Ng)$, it follows that $\mu(Stab^{-1}(N))=\mu(Stab^{-1}(g^{-1}Ng))$ for any $g\in G$. Therefore, $N$ is an almost normal subgroup, and $\alpha$ is invariant on $\cup_{g\in G}Stab^{-1}(g^{-1}Ng)$. We partition the space $X$ into $\sqcup_iX_i$, each $X_i$ being 
equal to $\cup_{g\in G}Stab^{-1}(g^{-1}Ng)$, for some almost normal subgroup $N$. We use Proposition \ref{p:split}, to obtain morphisms $\vp_i\colon H\to\Pi_k\s{{n_k^i}}$. If $\alpha|_{X_i}$ is $\vp_i$-constraint residually finite, for each $i$, it follows that $\alpha$ is $\vp$-residually finite. So, we assume, without the loss of generality, that $X=\cup_{g\in G}Stab^{-1}(g^{-1}Ng)$. 

Let $M=N_G(N)=\{g\in G:gNg^{-1}=N\}$. Then $N\unlhd M$ and $[G:M]<\infty$. Let $[G:M]=j$ and choose $g_1,\ldots,g_j$ such that $G=\sqcup_ig_iM$. So, $\{g_iNg_i^{-1}:i=1,\ldots,j\}=\{gNg^{-1}:g\in G\}$. Then,
\[Tr(A)=\mu\{x:A\subseteq Stab(x)\}=\frac{Card\{i:A\subseteq g_iNg_i^{-1}\}}j=\frac{Card\{i: g_i^{-1}hg_i\in N\ \forall h\in A\}}j.\]

Since $N$ is profinitely closed and $N$ is normal in $M$, then there exists a decreasing chain of finite index normal subgroups $N_m$ of $M$ such that $\cap_mN_m=N$. We choose each $N_m$ such that whenever $g_i^{-1}hg_i\notin N$, for some $i=1,\ldots,j$ and $h\in H$, we also have $g_i^{-1}hg_i\notin N_m$.

We denote by $\psi_m$ the action of $G$ on $G/N_m$. Let $k_m=[M:N_m]$, and choose $h_1,\ldots,h_{k_m}$ such that $M=\sqcup_{r=1}^{k_m}h_rN_m$. Then $G=\sqcup_{i,r}g_ih_rN_m$ and $\psi_m(h)(g_ih_rN_m)=g_ih_rN_m$ if and only if $h\in g_iN_mg_i^{-1}$ (as $N_m$ is normal in $M$). As such:
\[Tr_{\psi_m}(A)=\frac{Card\{(i,r):h\in g_iN_mg_i^{-1}\ \forall h\in A\}}{j\cdot k_m}=\frac{Card\{i:A\subseteq g_iN_mg_i^{-1}\}}{j}.\]

It follows that $Tr_{\psi_m}\to_{m\to\infty} Tr$.  Moreover, $Tr(A)=Tr_{\psi_m}(A)$ for any $A\subseteq H$, by the requirement on the groups $N_m$. As a consequence, by Proposition \ref{inclusion-exclusion}, $S_\alpha(T,H\setminus T)=S_{\psi_m}(T,H\setminus T)$ for each subgroup $T\leqslant H$.   The action trace $Tr_\alpha$ is $\vp$-constraint. Thus, $S_\alpha(T,H\setminus T)=S_\vp(T,H\setminus T)$. As such, by Observation \ref{ob:cosetM}, $r(\vp,T)=r(\psi_m,T)$ for each subgroup $T\leqslant H$.

Let $\vp=(\vp_k)_k$, with $\vp_k\colon H\to\s{n_k}$. Fix $m\in\nz$ and define:
\[s_k=\min_{T\leqslant H,\, r(\vp,T)\neq 0}\big\lfloor\frac{r(\vp_k,T)\cdot n_k}{r(\vp,T)\cdot |G/N_m|}\big\rfloor.\]
By Definition~\ref{def:cm}, $r(\vp_k,T)\to_{k\to\omega}r(\vp,T)$. It follows that that $s_k\cdot |G/N_m|/n_k\to_{k\to\omega}1$. Also, for any $k$ in some set $F\in\omega$, $s_k\cdot r(\psi_m,T)\cdot |G/N_m|\leqslant r(\vp_k,T)\cdot n_k$ for each subgroup $T\leqslant H$. So, $\psi_m\otimes 1_{s_k}\preccurlyeq\vp_k$, and hence, we can consider $\vp_k\ominus\psi_m\otimes 1_{s_k}$, where $\ominus$ denotes the subtraction of matrices. By the extension property of $H\leqslant G$, we construct a homomorphism $\eta_k\colon G\to\s{n_k-s_k\cdot|G/N_m|}$ that extends $\vp_k\ominus\psi_m\otimes 1_{s_k}$. Then, $\theta_m^k=\psi_m\otimes 1_{s_k}\oplus\eta_k$ is a homomorphism of $G$ to $\s{n_k}$, such that $\theta_m^k|_H=\vp_k$ and $\lim_{k\to\omega}Tr_{\theta_m^k}=Tr_{\psi_m}$. We use a diagonal argument to finish the proof.

Let $G=\cup_m E_m$, where $\{E_m\}_m$ is an increasing sequence of finite subsets. Define $F_0=\nz$ and 
\[F_m=\{k>m:|Tr_{\psi_m}(A)-Tr_{\theta_m^k}(A)|<\frac1m\ \forall A\subset E_m\}\cap F_{m-1}.\]
Then $F_m\in\omega$, $F_m\subset F_{m-1}$ and $\cap F_m=0$. For every $k\in\nz$, we define $m_k=\max\{m:k\in F_m\}$, so that $k\in F_{m_k}\setminus F_{m_k+1}$. We construct $\theta\colon G\to\Pi_k\s{n_k}$, $\theta=\Pi_k\theta_{m_k}^k$. Then $\theta|_H=\Pi_k\theta_{m_k}^k|_H=\Pi_k\vp_k=\vp$. 

Let $A\in\pp_f(G)$ and $\ve>0$. Choose $m_0$, such that $1/m_0<\ve$, $A\subset E_{m_0}$ and $|Tr_{\psi_m}(A)-Tr(A)|<\ve$ for all $m\geqslant m_0$. For all $k\in F_{m_0}$, we have $m_k\geqslant m_0$. As $k\in F_{m_k}$, we get $|Tr_{\psi_{m_k}}(A)-Tr_{\theta_{m_k}^k}(A)|<1/m_k\leqslant 1/m_0<\ve$. Therefore, $|Tr_{\theta_{m_k}^k}(A)-Tr(A)|<2\ve$, so $\lim_{k\to\omega}Tr_{\theta_{m_k}^k}(A)=Tr(A)$. It follows that $Tr_\theta=Tr$.
\end{proof}

\begin{ex}
The assumption ``$H$ be finite" in Proposition \ref{prop:cRF} is optimal. Let $\zz=\langle a\rangle\leqslant\langle a,b \mid ab=ba\rangle=\zz^2$ be a group inclusion. Let $\{n_k\}_k$ be a sequence of odd prime numbers. We construct $\vp\colon \zz\to\Pi_k\s{n_k}$ such that $\vp(a)_k$ is a cycle of maximum length in $\s{n_k}$. Let $\tau$ be a $\vp$-constraint residually finite action trace. Then there exists $\theta\colon \zz^2\to\Pi_k\s{n_k}$ such that $Tr_\theta=\tau$. It follows that $\theta(b)_k$ is a power of $\theta(a)_k$. As $n_k$ is odd prime, then either $\theta(b)_k=1_{n_k}$ or $Tr(\theta(b)_k)=0$. As such $Tr_\theta(b)\in\{0,1\}$. However, one can easily construct a sofic morphism $\psi\colon \zz^2\to\Pi_{k\to\omega}\s{n_k}$, such that $\psi(a)=Q\circ\vp(a)$ and $Tr(\psi(b))=1/2$. Then, $Tr_\psi$ is a $\vp$-constraint action trace that is not $\vp$-constraint residually finite.
\end{ex}

\begin{cor}\label{cor:constraint}
Let $H\leqslant G$ be countable groups with extension property, $G$ amenable and $H$ finite.  Suppose that $Sub(G)$ is countable and that every almost normal subgroup of $G$ is profinitely closed. Then $G$ is $\vp$-constraint stable, for every homomorphism $\vp\colon H\to\Pi_k\s{n_k}$. 
\end{cor}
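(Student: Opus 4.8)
The plan is to recognise this corollary as the immediate synthesis of Proposition~\ref{prop:cRF} and Theorem~\ref{te:constraint stable}: once those are in hand, essentially no new work remains. I would fix an arbitrary homomorphism $\vp\colon H\to\Pi_k\s{n_k}$ and verify that the hypotheses of each result are met in turn.

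First I would note that the standing assumptions of the corollary --- $H\leqslant G$ with extension property, $H$ finite, $Sub(G)$ countable, and every almost normal subgroup of $G$ profinitely closed --- are exactly the hypotheses of Proposition~\ref{prop:cRF}, which (notably) does not even require $G$ to be amenable. Applying it gives that every $\vp$-constraint action trace on $G$ is $\vp$-constraint residually finite. Next I would invoke the ``if'' direction of Theorem~\ref{te:constraint stable}, whose hypotheses ($H\leqslant G$ countable, $G$ amenable, $H$ finite) are supplied by the corollary and whose criterion for $\vp$-constraint stability is precisely the statement just obtained from Proposition~\ref{prop:cRF}. This yields that $G$ is $\vp$-constraint stable, and since $\vp$ was arbitrary, the corollary follows.

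The only point deserving attention is the bookkeeping of where each hypothesis is used: amenability of $G$ enters solely through the appeal to Theorem~\ref{te:constraint stable} and is not needed for Proposition~\ref{prop:cRF}, whereas the finiteness of $H$ is what underlies both the extension-property construction inside Proposition~\ref{prop:cRF} and the use of Proposition~\ref{prop:finite conjugated} embedded in the proof of Theorem~\ref{te:constraint stable}. There is no genuine obstacle here; all the substance has already been discharged in the two results being combined.
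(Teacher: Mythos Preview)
Your proposal is correct and matches the paper's own proof exactly: the paper's entire argument is the single line ``This follows by Proposition~\ref{prop:cRF} and Theorem~\ref{te:constraint stable}.'' Your additional bookkeeping about where each hypothesis is consumed is accurate and a nice gloss, but not something the paper itself spells out.
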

\begin{proof}
This follows by Proposition \ref{prop:cRF} and Theorem \ref{te:constraint stable}.
\end{proof}

\begin{te}\label{th:stable amalgamated}
Let $G_1$ and $G_2$ be countable groups with a common finite subgroup $H$.
Suppose that $G_1$ is stable in permutations, $G_2$ is amenable, $Sub(G_2)$ is countable and that every almost normal subgroup of $G_2$ is profinitely closed, and $H\leqslant G_2$ is with extension property. Then $G_1*_HG_2$ is stable in permutations.
\end{te}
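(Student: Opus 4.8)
The plan is to obtain this theorem as an immediate synthesis of the two main results already established, namely Corollary~\ref{cor:constraint} and Theorem~\ref{th:amalgamation}. The strategy is to first upgrade the hypotheses on $G_2$ to the full $\vp$-constraint stability needed to feed the amalgamation theorem, and then simply invoke that theorem.

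First I would apply Corollary~\ref{cor:constraint} to the pair $H\leqslant G_2$. The hypotheses of that corollary are matched exactly by what is assumed here about $G_2$: the group $G_2$ is amenable, the common subgroup $H$ is finite, $Sub(G_2)$ is countable, every almost normal subgroup of $G_2$ is profinitely closed, and $H\leqslant G_2$ is with extension property. Its conclusion yields that $G_2$ is $\vp$-constraint stable for \emph{every} homomorphism $\vp\colon H\to\Pi_k\s{n_k}$. This quantification over all $\vp$ is precisely what the amalgamation theorem will consume.

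Next I would invoke Theorem~\ref{th:amalgamation} with the given $G_1$, $G_2$, and common subgroup $H$. That theorem asks for two things: that $G_1$ be stable in permutations, which is among our hypotheses, and that $G_2$ be $\vp$-constraint stable for every homomorphism $\vp\colon H\to\Pi_k\s{n_k}$, which is exactly what the previous step delivered. Its conclusion is that $G_1*_HG_2$ is stable in permutations, completing the argument.

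I expect no genuine obstacle at this level: the only task is to verify that the hypotheses line up, and they do without modification. The real technical weight sits behind the results being cited---namely Proposition~\ref{prop:cRF}, underlying Corollary~\ref{cor:constraint}, where the countability of $Sub(G_2)$ and the profinite closedness of almost normal subgroups are used to approximate the action trace by residually finite ones via the coset-multiplicity bookkeeping and the extension property, and Theorem~\ref{th:amalgamation}, where the universal property of the free amalgamated product glues the two constraint lifts. Thus the present statement is best viewed as the packaging of those two ingredients into a single ready-to-use criterion.
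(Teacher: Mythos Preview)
Your proposal is correct and matches the paper's own proof essentially verbatim: the paper simply writes ``This follows by Theorem~\ref{th:amalgamation} and Corollary~\ref{cor:constraint}.'' Your two-step plan---first apply Corollary~\ref{cor:constraint} to obtain $\vp$-constraint stability of $G_2$ for every $\vp$, then feed this into Theorem~\ref{th:amalgamation}---is exactly that.
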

\begin{proof}
This follows by Theorem~\ref{th:amalgamation} and Corollary~\ref{cor:constraint}. 
\end{proof}

%%%%%%%%%%%%%%%%%%%
\section{Examples of stable groups}\label{sec:examples stable}

We use the results in the last section to provide new examples of groups stable in permutations. The next result shows how to obtain pairs of groups satisfying the hypotheses of Corollary \ref{cor:constraint}.

\begin{p}\label{prop:sd}
Let $G$ be a group such that $Sub(G)$ is countable and that every almost normal subgroup of $G$ is profinitely closed. Let $H$ be a finite group acting on $G$. Then $G\rtimes H$ has countably many subgroups and every almost normal subgroup of $G$ is profinitely closed.
\end{p}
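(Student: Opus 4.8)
The proposition as stated should read that \emph{every almost normal subgroup of $G\rtimes H$ is profinitely closed}; this is what I would prove. Write $\Gamma=G\rtimes H$ and let $\pi\colon\Gamma\to H$ be the projection with kernel $G$, so $G\unlhd\Gamma$ and $[\Gamma:G]=|H|<\infty$. First I would record that $G$ is countable: since $Sub(G)$ is countable, there are only countably many distinct cyclic subgroups $\langle g\rangle$, each countable, and $G=\bigcup_{g\in G}\langle g\rangle$, so $G$ is a countable union of countable sets; consequently $\Gamma$ is countable, which will let me take all intersections of finite index subgroups to be countable, as in the definition of profinitely closed.

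For the countability of $Sub(\Gamma)$, the plan is to fibre the set of subgroups over $Sub(G)$ via $L\mapsto K:=L\cap G$. For a fixed $K$, every $L$ with $L\cap G=K$ satisfies $K\unlhd L\leqslant N_\Gamma(K)$ and $L/K\cong\pi(L)\leqslant H$, so $L/K$ is a finite subgroup of the countable group $N_\Gamma(K)/K$. Since a countable group has only countably many finite subgroups (each is a finite subset of a countable set), there are countably many such $L$ for each $K$; as $Sub(G)$ is countable, taking a countable union over $K$ shows $Sub(\Gamma)$ is countable.

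For the second assertion, let $L\leqslant\Gamma$ be almost normal and set $K=L\cap G$. I would first transfer almost normality downstairs: if $g\in G\cap N_\Gamma(L)$ then $gKg^{-1}=gLg^{-1}\cap gGg^{-1}=L\cap G=K$, using $G\unlhd\Gamma$, so $G\cap N_\Gamma(L)\leqslant N_G(K)$; since $[\,G:G\cap N_\Gamma(L)\,]\leqslant[\Gamma:N_\Gamma(L)]<\infty$, the subgroup $K$ is almost normal in $G$. By hypothesis $K$ is then profinitely closed in $G$, say $K=\bigcap_i V_i$ with each $V_i\leqslant G$ of finite index. Each $V_i$ has finite index in $\Gamma$ as well (of index $|H|\cdot[G:V_i]$), so $K$ is already an intersection of finite index subgroups of $\Gamma$, i.e. $K$ is profinitely closed in $\Gamma$.

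It remains to climb from $K$ to $L$, and this is the main obstacle: $L$ is strictly larger than $K$ in the $H$-directions, and a finite index subgroup of $\Gamma$ containing $K$ need not contain $L$. The clean way around it is to pass to the profinite topology of $\Gamma$, for which being profinitely closed is exactly being closed: finite index subgroups are clopen, and the closure of a subgroup $S$ equals $\bigcap_N SN$ taken over finite index normal $N\unlhd\Gamma$, an intersection of finite index subgroups. This is a group topology, so left translations are homeomorphisms. Now $L/K\cong\pi(L)$ is finite, hence $L=\bigcup_{j=1}^{m}\ell_jK$ is a \emph{finite} union of left translates of the closed set $K$, with $m=[L:K]\leqslant|H|$; each $\ell_jK$ is closed, and a finite union of closed sets is closed, whence $L$ is closed, i.e. profinitely closed in $\Gamma$. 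Concretely, one separates a given $\gamma\notin L$ from $L$ either by $\pi^{-1}(\pi(L))$ when $\pi(\gamma)\notin\pi(L)$, or, writing $\gamma=\ell g$ with $\ell\in L$ and $g\in G\setminus K$, by $U=LV$ for a finite index, $L$-invariant $V\leqslant G$ with $K\leqslant V$ and $g\notin V$, such a $V$ being obtained by replacing a separating $V_0$ with the intersection of its finitely many $L$-conjugates (finite in number because they all contain the finite index normal core of $V_0$).
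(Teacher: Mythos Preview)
Your proof is correct, including the reading of the intended conclusion. For the countability of $Sub(\Gamma)$ both you and the paper fibre over $Sub(G)$ via $L\mapsto L\cap G$; the paper records $L=\bigcup_{h\in H_0}L_eg_h$ with $H_0=\pi(L)$ and coset representatives $g_h$, while you phrase the same finiteness by noting that $L/K$ is a finite subgroup of the countable group $N_\Gamma(K)/K$. For the profinitely closed assertion the arguments diverge in presentation: the paper first replaces each separating $K_i\leqslant G$ by the intersection of its finitely many $L$-conjugates (so that $K_iL$ is a subgroup) and then checks by hand, using $\pi$, that $\bigcap_iK_iL=L$. You instead pass to the profinite topology on $\Gamma$, observe that $K$ is closed there, and conclude because $L$ is a finite union of cosets of $K$; this is a cleaner packaging that makes explicit why only the finiteness of $[L:K]\leqslant|H|$ is needed. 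Your ``concretely'' paragraph, producing an $L$-invariant finite-index $V\leqslant G$ with $K\leqslant V$ and $g\notin V$ and then using $LV$, is exactly the paper's construction written out for a single separating subgroup.
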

\begin{proof}
In order to prove that $G\rtimes H$ has countably many subgroups, one can use  \cite{Cu-Sm}*{Lemma 2.1}. We anyway have to study the structure of an arbitrary subgroup of $G\rtimes H$  for the other statement. 

Let $L\leqslant G\rtimes H$ be a subgroup. Define $L_e=L\cap G$. Let $\vp\colon G\rtimes H\to H, (g,h)\mapsto h$ be the canonical projection homomorphism induced by the structure of the semidirect product. We define $H_0=\vp(L)$. Choose $g_h\in L$ such that $\vp(g_h)=h$ for each $h\in H_0$. It is easy to see that $L=\cup_{h\in H_0}L_eg_h$. This shows that $Sub(G\rtimes H)$ is countable.

Assume now that $L$ is almost normal in $G\rtimes H$. Then, using the definition, we see that $L_e$ is almost normal in $G$. By hypothesis, there exists finite index subgroups $K_i$ of $G$ such that $L_e=\cap_iK_i$. We replace each $K_i$ with $\cap_{h\in H_0}g_hK_ig_h^{-1}$. Then, $K_i$ are still finite index subgroups in $G$ (since $H_0$ is finite)
and $L_e=\cap_iK_i$. Moreover, $gK_ig^{-1}=K_i$ for each $g\in L$. As such, the subgroup generated in $G\rtimes H$ by $K_i$ and $L$ is $K_iL$.

We use these subgroups to prove that $L$ is profinitely closed. Clearly, $K_iL$ are finite index subgroups of $G\rtimes H$. Let $g\in\cap_iK_iL$. Then, for each $i$, there exists $k_i\in K_i$ and $h_i\in H_0$ such that $g=k_ig_{h_i}$. Now, $\vp(g)=\vp(k_ig_{h_i})=h_i$, so $h_i$ is independent of  $i$, and $g=k_ig_h$ for some $h\in H$. Then $gg_h^{-1}\in K_i$ for each $i$, so $gg_h^{-1}\in L_e$. It follows that $g\in L$, so $\cap_i K_iL=L$, and hence, $L$ is profinitely closed.
\end{proof}

The class of groups with countably many subgroups is closed under taking subgroups and quotients but, in general, not under extensions, nor even direct products. For example, if $p$ is a prime, the Pr\"ufer $p$-group $C_{p^\infty}$ has $Sub(C_{p^\infty})$ countable, but its direct square 
$C_{p^\infty}\times C_{p^\infty}$ has $2^{\aleph_0}$ subgroups~\cite{Cu-Sm}.

According to Lemma \ref{lem:retract}, the pair $H\leqslant G\rtimes H$ is always with extension property. As such, under the hypothesis of Proposition~\ref{prop:sd}, the pair $H\leqslant G\rtimes H$ satisfies all the assumptions of Corollary~\ref{cor:constraint}. By also using  Proposition \ref{prop:sd} and Theorem \ref{th:amalgamation}, we obtain the following general result.

\begin{te}\label{thm:am}
 Let $G_1$ be a countable group stable in permutations and $H$ be a finite subgroup. Let $G_2$ be a countable amenable group with $Sub(G_2)$ countable, every almost normal subgroup profinitely closed, and such that $H$ is acting on $G_2$. Then $G_1*_H(G_2\rtimes H)$ is stable in permutations.
\end{te}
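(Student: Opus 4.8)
The plan is to assemble \cref{thm:am} from the machinery already developed, with essentially no new computation. The statement wants us to prove that $G_1*_H(G_2\rtimes H)$ is stable in permutations, where $G_1$ is stable in permutations, $H$ is finite, and $G_2$ is amenable with $\mathrm{Sub}(G_2)$ countable and all almost normal subgroups profinitely closed. The natural strategy is to recognize this as a free amalgamated product over $H$ and invoke \cref{th:amalgamation}. That theorem requires two hypotheses: that $G_1$ is stable in permutations (given), and that the second factor, here $G_2\rtimes H$, is $\vp$-constraint stable for \emph{every} homomorphism $\vp\colon H\to\Pi_k\s{n_k}$. So the entire content of the proof reduces to verifying that $G_2\rtimes H$ satisfies this constraint-stability condition.

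First I would check that the pair $H\leqslant G_2\rtimes H$ meets every hypothesis of \cref{cor:constraint}. By \cref{lem:retract}, since $H$ is a retract of $G_2\rtimes H$ (the canonical projection $G_2\rtimes H\to H$ restricts to the identity on $H$), the pair $H\leqslant G_2\rtimes H$ is with extension property. The group $G_2\rtimes H$ is amenable, being an extension of the amenable group $G_2$ by the finite group $H$. The subgroup $H$ is finite by assumption. The remaining two conditions---that $\mathrm{Sub}(G_2\rtimes H)$ is countable and that every almost normal subgroup of $G_2\rtimes H$ is profinitely closed---are exactly the conclusions furnished by \cref{prop:sd}, whose hypotheses ($\mathrm{Sub}(G_2)$ countable, every almost normal subgroup of $G_2$ profinitely closed, and $H$ finite acting on $G_2$) are precisely those we are given.

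With all hypotheses of \cref{cor:constraint} verified for the pair $H\leqslant G_2\rtimes H$, that corollary yields directly that $G_2\rtimes H$ is $\vp$-constraint stable for every homomorphism $\vp\colon H\to\Pi_k\s{n_k}$. This is exactly the second hypothesis required by \cref{th:amalgamation}, with $G_2\rtimes H$ in the role of the factor $G_2$ of that theorem. Applying \cref{th:amalgamation} with the common subgroup $H$ then gives that $G_1*_H(G_2\rtimes H)$ is stable in permutations, completing the argument.

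I do not expect a genuine obstacle here, as the result is purely a synthesis of earlier results; the only point requiring care is bookkeeping about which group plays which role. In particular one must be attentive that \cref{prop:sd} is applied to transfer the structural properties ($\mathrm{Sub}$ countable, almost normal subgroups profinitely closed) from $G_2$ to $G_2\rtimes H$, rather than attempting to verify them for $G_2\rtimes H$ directly, and that the extension property for $H\leqslant G_2\rtimes H$ comes from the retract structure inherent in the semidirect product via \cref{lem:retract}. Once these correspondences are set up correctly, the chain \cref{prop:sd} $\to$ \cref{cor:constraint} $\to$ \cref{th:amalgamation} closes immediately.
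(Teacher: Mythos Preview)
Your proposal is correct and follows essentially the same route as the paper: use \cref{lem:retract} for the extension property of $H\leqslant G_2\rtimes H$, invoke \cref{prop:sd} to transfer the structural hypotheses from $G_2$ to $G_2\rtimes H$, apply \cref{cor:constraint} to obtain $\vp$-constraint stability for every $\vp$, and conclude via \cref{th:amalgamation}. You also make explicit the amenability of $G_2\rtimes H$ (needed for \cref{cor:constraint}), which the paper leaves implicit.
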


Here are some concrete examples of groups stable in permutations by Theorem \ref{thm:am}.

\begin{ex}[Virtually free examples]

 The special linear group  $SL_2(\zz)\cong \zz_4*_{\zz_2}(\zz_3\times\zz_2)$ is stable in permutations as $\zz_2\leqslant \zz_3\times\zz_2$ is with extension property and other hypothesis of Theorem~\ref{thm:am} are also satisfied.

Given arbitrary groups $G_1, G_2$ and $Q$, the semidirect product of $G_1\ast_H G_2$ by $Q$ is isomorphic to the free product of 
$G_1\rtimes Q$ and $G_2\rtimes Q$ amalgamated over $H\rtimes Q$:
\begin{equation}\tag{$\rtimes$}\label{eq:sD}
(G_1\ast_H G_2)\rtimes Q\cong (G_1\rtimes Q)\ast_{H\rtimes Q}(G_2\rtimes Q).
\end{equation}
In particular, for the general linear group: $GL_2(\zz)\cong SL_2(\zz)\rtimes \zz_2\cong(\zz_4\rtimes \zz_2)\ast_{\zz_2\rtimes\zz_2}(\zz_6\rtimes\zz_2)$. It is
stable in permutations. Indeed, by Gasch\"utz' complement theorem~\cite{compl}*{Satz 1 on p. 99}, since the normal subgroup $\zz_3\trianglelefteq (\zz_6\rtimes\zz_2)$ has a complement in $\zz_6\cong \zz_3\times \zz_2$, then it has a complement in $\zz_6\rtimes\zz_2$. It is clear that such a complement $(\zz_6\rtimes\zz_2)/ \zz_3$ is isomorphic to $\zz_2\rtimes\zz_2$. Therefore, $\zz_2\rtimes\zz_2$ is a retract of  $\zz_6\rtimes\zz_2$, and hence,  $\zz_2\rtimes\zz_2\leqslant \zz_6\rtimes\zz_2$ is with extension property. Other hypothesis of Theorem \ref{thm:am} are clearly satisfied. 

Both $SL_2(\zz)$ and $GL_2(\zz)$ are virtually free groups, and hence, they are stable in permutations also by a different proof from~\cite{VFree}. 
\end{ex}

\begin{ex}[Non-virtually free examples]
By varying the groups involved in the free amalgamated product from Theorem  \ref{thm:am}  or in the above semidirect product construction (\ref{eq:sD}), we obtain many non-amenable groups stable in permutations, which are not virtually free. 

For instance, $GL_2(\zz)*_{H}(BS(1,n)\rtimes H)$ is not virtually free and it is stable in permutations by Theorem~\ref{thm:am}. Indeed, the Baumslag-Solitar group $BS(1,n)$ satisfies the hypothesis of Theorem~\ref{thm:am} by ~\cite{BLT}*{Proof of Corollary~8.4} and $BS(1,n)\rtimes H$, where $H$ is a finite subgroup of $GL_2(\zz)$, satisfies the hypothesis of Theorem~\ref{thm:am}, by Proposition~\ref{prop:sd}. 

Gasch\"utz' type results and its generalisations~\cite{Grec} yield many pairs $H\rtimes Q\leqslant G_2\rtimes Q$ with extension property so that, also by Proposition~\ref{prop:sd}, Theorem~\ref{thm:am} applies to the above semidirect product construction~(\ref{eq:sD}), where $G_1$ and $G_2$ are as in Theorem~\ref{thm:am}.

\end{ex} 
\begin{ex}[Around just-infinite branch groups]
Let $\Gamma$ be the first Grigorchuk group or the Gupta-Sidki $p$-group. Then, $\Gamma$ is stable in permutations~\cite{Gr}*{Theorem~6.6}.
Therefore, $\Gamma*_{H}(G_2\rtimes H)$ is stable in permutations, where $H$ is a finite subgroup of $\Gamma$ and $G_2$ is an arbitrary group satisfying the hypothesis of Theorem \ref{thm:am}.

\end{ex}

%%%%%%%%%%%%%%
\section{Further results and questions}\label{open}

\subsection{(Very) flexible stability}
There are natural dimension related relaxations of stability, called \emph{flexible stability} and \emph{very flexible stability}:  an almost solution  in  $S_n$ is required to be close, in a suitable  sense, to a solution in $S_N$, for $N$ not necessarily equal to $n$~\cite{BL}. It is straightforward to adapt our concepts and results to such a setting. For instance, Theorem \ref{thm:am} has  the following analogue.

\begin{te}\label{thm:flex}
 Let $G_1$ be a countable group flexibly (respectively, very flexibly) stable in permutations and $H$ be a finite subgroup. Let $G_2$ be a countable amenable group with $Sub(G_2)$ countable, every almost normal subgroup profinitely closed, and such that $H$ is acting on $G_2$. Then $G_1*_H(G_2\rtimes H)$ is flexibly (respectively, very flexibly)  stable in permutations.
\end{te}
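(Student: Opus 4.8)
The goal is to prove the flexible (respectively, very flexible) analogue of Theorem~\ref{thm:am}. The plan is to re-run the entire argument of the non-flexible case, replacing every instance of stability in permutations by its flexible counterpart and checking that each intermediate result survives the relaxation. The essential structural input is Theorem~\ref{th:amalgamation}: if $G_1$ is (flexibly) stable in permutations and $G_2$ is $\vp$-constraint stable for every $\vp\colon H\to\Pi_k\s{n_k}$, then $G_1*_HG_2$ is (flexibly) stable. So the first task is to record that the amalgamation theorem itself admits a flexible version, and the second task is to verify that the constraint-stability hypothesis on $G_2=G_2'\rtimes H$ (furnished via Corollary~\ref{cor:constraint} and Proposition~\ref{prop:sd}) can be taken in its flexible form.

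\textbf{First I would} address the amalgamation step. In the flexible setting, a sofic morphism $\theta\colon G_1*_HG_2\to\Pi_{k\to\omega}\s{n_k}$ is lifted not by a homomorphism into $\Pi_k\s{n_k}$ on the same index sequence but into $\Pi_k\s{N_k}$ with $N_k\geqslant n_k$ and $N_k/n_k\to 1$ along $\omega$ (for very flexible stability, only $N_k\geqslant n_k$, with no control on the ratio). The proof of Theorem~\ref{th:amalgamation} lifts $\theta|_{G_1}$ using flexible stability of $G_1$, producing $\psi_1\colon G_1\to\Pi_k\s{N_k}$ with enlarged sets; this forces the constraint $\vp=\psi_1|_H$ to land in $\Pi_k\s{N_k}$ as well. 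One then needs $G_2$ to be $\vp$-constraint stable \emph{in the corresponding flexible sense}, so that $\psi_2\colon G_2\to\Pi_k\s{N_k}$ lifts $\theta|_{G_2}$ on the \emph{same} enlarged sequence $\{N_k\}_k$ and agrees with $\psi_1$ on $H$. The universal property of the amalgamated product then assembles $\psi_1*_H\psi_2$ exactly as before. The point to check carefully is that the two factors use the same enlargement $\{N_k\}_k$: since $H$ is finite, once $\psi_1$ fixes $\vp$ on $\Pi_k\s{N_k}$, the constraint-stability lift of $G_2$ is taken relative to that very $\vp$, so there is no mismatch.

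\textbf{Next I would} verify that the constraint-stability engine—Theorem~\ref{te:constraint stable}, Proposition~\ref{prop:cRF}, Corollary~\ref{cor:constraint}—goes through flexibly for $G_2=G_2'\rtimes H$. Here I expect the argument to be essentially unchanged, because the heart of the amenable constraint-stability characterisation is the generalised Elek--Szab\'o conjugacy result (Theorem~\ref{thm:generalised Elek}), which is about matching action traces via a conjugating element of the ultraproduct and does not itself see the flexible enlargement; the enlargement enters only as a tensoring by a trivial block $1_{s_k}$, exactly as the skew/direct sum construction $\psi_m\otimes 1_{s_k}\oplus\eta_k$ in the proof of Proposition~\ref{prop:cRF}. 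Since adding fixed points (padding with identity permutations on extra letters) is precisely the freedom that flexible stability grants, the residual-finiteness-of-action-traces criterion adapts with $N_k=n_k+$(padding), and Proposition~\ref{prop:sd} (countability of $Sub$, profinite closedness of almost normal subgroups) together with the retract extension property of $H\leqslant G_2'\rtimes H$ from Lemma~\ref{lem:retract} are preserved verbatim since they are purely group-theoretic and independent of the approximation.

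\textbf{The hard part} will be bookkeeping the index sequences so that $G_1$ and $G_2$ are lifted over a common $\{N_k\}_k$, and confirming that Theorem~\ref{thm:generalised Elek} remains applicable after padding. In the very flexible case, where the ratio $N_k/n_k$ is uncontrolled, one must ensure that the Benjamini--Schramm/statistical-distance comparison underlying Theorem~\ref{thm:generalised Elek} still forces conjugacy; this is fine because the trace normalisation $Tr_\theta=\lim_{k\to\omega}Tr_{q_k\circ\theta}$ already accounts for the denominator $n_k$, and padding with fixed points shifts traces in a computable way that the action-trace formalism tracks. I therefore expect no genuinely new obstacle beyond careful matching of enlargements, which is why the excerpt can legitimately assert that the adaptation is ``straightforward''.
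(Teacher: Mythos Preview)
The paper does not give a proof of this theorem; it merely states that ``it is straightforward to adapt our concepts and results to such a setting'' and presents Theorem~\ref{thm:flex} as the flexible analogue of Theorem~\ref{thm:am} without further argument. Your proposal is therefore more detailed than what the paper offers: you correctly identify that the adaptation runs through flexible versions of Theorem~\ref{th:amalgamation}, Theorem~\ref{te:constraint stable}, Proposition~\ref{prop:cRF}, and Corollary~\ref{cor:constraint}, with Proposition~\ref{prop:sd} and Lemma~\ref{lem:retract} unchanged, and you flag the one genuine bookkeeping issue (matching the enlarged index sequence $\{N_k\}_k$ across both factors). This is exactly the route the paper implicitly intends, so your plan is aligned with the paper's approach and simply makes explicit what the paper leaves to the reader.
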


This gives new examples of flexibly (respectively, very flexibly) stable groups. 

%%%%%%%%%%%%%%%%%%%
\subsection{Finite index subgroups stable in permutations}
The homomorphism extension property can be relaxed to having extensions to a finite index subgroup of $G$ containing $H$ (instead of $G$ itself).

\begin{de}[Local extension property, cf.  \cite{Reid}*{Definition 1.1.} ]\label{def:loc} 
A pair of countable groups $H\leqslant G$ has \emph{local extension property} if, for every $n\in\mathbb N$ and  for every homomorphism $\vp\colon H\to\s{n}$, there exists a finite index subgroup $K\leqslant G$, with $H\leqslant K$, and a  homomorphism $\vp^\circ\colon K\to \s{n}$ such that $\vp^\circ|_H=\vp.$
\end{de}

\begin{te}\cite{Reid}*{Theorem 1.2.}\label{thm:reid}
If $G$ is subgroup separable (or, in other terms, LERF) and $H$ is a finitely generated subgroup of $G$, then the pair $H\leqslant G$ has local extension property.
\end{te}

\begin{lemma}\label{lem:local}
Let $H\leqslant G$ be countable groups, $H$ be a finite group and $G$ be an LERF group. Then there exists a finite index subgroup $K\leqslant G$ containing $H$ such that $H\leqslant K$ is with extension property.  
\end{lemma}
\begin{proof}
Let $A=\{\vp_i\colon H\to\s{n_i}: i\in I\}$ be the finite collection of transitive actions of $H$ onto finite sets. If for some group $K$, containing $H$, these homomorphisms can be lifted to $K$, then $H\leqslant K$ is with extension property. Indeed, any homomorphism $H\to\s{n}$ is a direct sum of homomorphisms in $A$.

By Theorem~\ref{thm:reid}, for each $i\in I$, let $K_i\leqslant G$ be the finite index subgroup containing $H$ such that there exists $\vp^\circ_i\colon K_i\to\s{n_i}$ that extends $\vp_i$. Let $K=\cap_{i\in I}K_i$. Since $I$ is finite, $K$ is a finite index subgroup in $G$. Moreover, any $\vp_i$, $i\in I$ can be extended to $K$. It follows that $H\leqslant K$ is with extension property.
\end{proof}

Lemma~\ref{lem:local} yields a general finite index variant of Theorem \ref{th:stable amalgamated}.

\begin{te}\label{thm:findex}
Let $G_1$ and $G_2$ be countable groups with a common finite subgroup $H$.
Suppose that $G_1$ is stable in permutations, $G_2$ is amenable and  LERF, $Sub(G_2)$ is countable, and that every almost normal subgroup of $G_2$ is profinitely closed.
Then  $G_1*_HG_2$ has a finite index subgroup that is stable in permutations.
\end{te}
\begin{proof}
By Lemma~\ref{lem:local}, we find a finite index subgroup $K\leqslant G_2$ containing $H$ such that $H\leqslant K$ is with extension property. Then, $G_1*_HK$ has finite index in $G_1*_HG_2$ and, by Theorem \ref{th:stable amalgamated}, $G_1*_HK$ is stable in permutations.
\end{proof}

\begin{ex}
The assumption ``$H$ is finite" in Theorem \ref{thm:findex} is optimal. Indeed, $(\Z \times \Z)\ast_\Z(\Z \times \Z)=\mathbb F_2 \times \Z$ satisfies all the other assumptions (with $H=\Z$) but every finite index subgroup of $\mathbb F_2 \times \Z$ is not stable in permutations~\cite{MR4134896}.
\end{ex}

\begin{rem}\label{rem:notERF}
The assumption on almost normal subgroups in Theorem \ref{thm:findex} is not redundant. Indeed, there exist amenable LERF groups with a not profinitely closed normal subgroup: the wreath product $A\wr \Z$, where $A$ is a finitely generated abelian group, is such an example~\cite{MR2295543}*{Proposition~3.19}.
\end{rem}

%%%%%%%%%%%%%%%%%%%%
\subsection{Open questions}
In proving results of Section~\ref{sec:examples constraint}, we require the homomorphism extension property of $H\leqslant G$.
By Lemma~\ref{lem:retract}, every semidirect product $G=K\rtimes H$ yields a pair $H\leqslant G$ with extension property. In Remark~\ref{rem:nonR}, we have given an example of a pair $H\leqslant G$ with extension property, where $H$ is not a retract. How far can we go from the semidirect products?

\begin{pr}
Let $G= K\bowtie H$ be the Zappa-Sz\'ep product of two groups. Characterise the pairs $H\leqslant G$ with extension property.
\end{pr}

Stability in permutations is not preserved under arbitrary amalgamated free product  or semidirect product constructions~\cite{BL}, and not even under the direct product with $\zz$~\cite{MR4134896}. In contrast, our results give many examples of non-amenable amalgamated free products  and semidirect products  which are stable  in permutations. The following basic question is still open.

\begin{qu}
Let $G$ be a countable group stable in permutations. Let $H$ be a finite group. Is $G\rtimes H$ stable in permutations?
\end{qu}

Together with the first Grigorchuk group and the Gupta-Sidki $p$-group, Grigorchuk's groups $G_\omega$, with $\omega$ in a certain uncountable subset of $\{0, 1, 2\}^{\zz_+}$, are stable in permutations~\cite{Gr}. All these uncountably many groups are amenable but not elementary amenable. They are finitely generated but not finitely presented. 

\begin{qu}
Does there exist a finitely presented amenable but not elementary amenable group stable in permutations?
\end{qu}

We expect a positive answer. A natural candidate is the finitely presented Grigorchuk group $\Gamma_\sigma=\langle \Gamma, t \mid t^{-1}\Gamma t=\sigma(\Gamma)\rangle$, where $\Gamma$ is the first Grigorchuk group and $\sigma$ is Lys\"enok's endomorphism of $\Gamma$~\cite{MR1616436}. However, although $\Gamma$ is residually finite, $\Gamma_\sigma$ is not~\cite{MR1868545}. 
Therefore, since $\Gamma_\sigma$ is amenable, then $\Gamma_\sigma$ is not stable in permutations, by~\cite[Theorem 4.3]{Ar-Pa2} (also by \cite[Theorem 2]{Gl-Ri}, using stability in permutations for presentations of groups, together with another result from~\cite{Ar-Pa2}, showing that stability is a group property, i.e., it is independent of the choice of the presentation).

The free amalgamated product from the next question is a building block of the famous Higman group~\cite{MR38348}:
$$
H\cong \left(BS(1,2)\ast_\zz BS(1,2)\right)\ast_{\ff_2} (BS(1,2)\ast_\zz BS(1,2)),
$$
an infinite group all of whose finite quotients are trivial.
It follows from  \cite[Theorem 4.3]{Ar-Pa2} that if $H$ is stable in permutations, then $H$ is  not sofic. 
%%%%%%

In detail, let us consider the Baumslag-Solitar group, $BS(1,n)=\langle x_i,t_i  \mid t_i^{-1}x_it_i=x_i^n\rangle, i=1,2,3,4$.
Then we form three types of the free amalgamated products over an infinite cyclic group:
\begin{eqnarray*}
H(t_1,t_2)=\langle x_1, t_1 \mid t_1^{-1}x_1t_1=x_1^n\rangle\ast_{\langle t_1\rangle =\langle t_2\rangle}\langle x_2, t_2 \mid t_2^{-1}x_2t_2 =x_2^n\rangle,\\
H(x_1,x_2)=\langle x_1, t_1 \mid t_1^{-1}x_1t_1=x_1^n\rangle\ast_{\langle x_1\rangle =\langle x_2\rangle}\langle x_2, t_2 \mid t_2^{-1}x_2t_2 =x_2^n\rangle,\\
H(x_1,t_2)=\langle x_1, t_1 \mid t_1^{-1}x_1t_1=x_1^n\rangle\ast_{\langle x_1\rangle =\langle t_2\rangle}\langle x_2, t_2 \mid t_2^{-1}x_2t_2 =x_2^n\rangle.
\end{eqnarray*}
Being the free amalgamated products of sofic (even solvable) groups over an amenable (even cyclic) group, all these groups are sofic~\cites{El-Sz2,Pa1}. 

The group $H(t_1,t_2)$ is residually finite, since the amalgamation is along the retract~\cite{MR306329} or, by \cite[Theorem 1]{Malcev} or by a direct argument, because $H(t_1,t_2)=\mathbb F_2\rtimes \zz$, a semidirect product of a finitely generated residually finite group by a residually finite group.

 The group $H(x_1,x_2)$ is not Hopfian~\cite{MR38347}: an endomorphism $x_1\mapsto x_1^n, t_1\mapsto t_1, t_2\mapsto t_2$ is surjective but not injective. By a theorem of Mal'cev~\cite{MR0003420}, every finitely generated residually finite group is Hopfian. Therefore, $H(x_1,x_2)$ is not residually finite. It follows from  \cite[Theorem 4.3]{Ar-Pa2} that $H(x_1,x_2)$ is not stable in permutations. 
 
 Finally, for $n=2$, the group $H(x_1,t_2)$ is the above mentioned building block of the Higman group:  
$$
H=H(x_1,t_2)\ast_{\langle t_1,\, x_2\rangle = \langle t_3,\, x_4 \rangle}H(x_3, t_4).
$$

\begin{qu} Let $n\geqslant 2$.
Is $H(x_1,t_2)=BS(1,n)\ast_\zz BS(1,n)$ stable in permutations?
\end{qu}

Since the arXiv version (v1) of the present article appeared on arXiv, we were informed that standard Bass-Serre theory arguments show that $H(x_1,t_2)$ is not residually finite~\cite{Yves}. Then, as above, 
by \cite[Theorem 4.3]{Ar-Pa2}, $H(x_1,t_2)$ is not stable in permutations. Thus, the answer to the preceding question is negative.

\begin{conj}
The Higman group $H$ is not stable in permutations.
\end{conj}

\begin{bibdiv}
\begin{biblist}

\bib{AGV}{article}{
	author={Abert, M.},
	author={Glasner, Y.},
	author={Virag, B.},
	title={Kesten’s theorem for invariant random subgroups},
	journal={Duke Math. J.},
	volume={163},
	date={2014},
	number={3},
	pages={465--488},
	%issn={0022-1236},
	%review={\MR{3350728}},
	%doi={10.1016/j.jfa.2015.02.013},
}

\bib{Ar-Pa2}{article}{
   author={Arzhantseva, G.},
   author={P\u{a}unescu, L.},
   title={Almost commuting permutations are near commuting permutations},
   journal={J. Funct. Anal.},
   volume={269},
   date={2015},
   number={3},
   pages={745--757},
   %issn={0022-1236},
   %review={\MR{3350728}},
   %doi={10.1016/j.jfa.2015.02.013},
}

\bib{Ar-Pa3}{article}{
   author={Arzhantseva, G.},
   author={P\u{a}unescu, L.},
   title={Constraint metric approximations and equations in groups},
   journal={J. Algebra},
   volume={516},
   date={2018},
   pages={329--351},
   %issn={0021-8693},
   %review={\MR{3863481}},
   %doi={10.1016/j.jalgebra.2018.09.007},
}

\bib{Babai}{article}{
   author={Babai, L.},
   author={Black, T. J. F.},
   author={Wuu, A.},
   title={List-decoding homomorphism codes with arbitrary codomains},
   conference={
      title={Approximation, randomization, and combinatorial optimization.
      Algorithms and techniques},
   },
   book={
      series={LIPIcs. Leibniz Int. Proc. Inform.},
      volume={116},
      publisher={Schloss Dagstuhl. Leibniz-Zent. Inform., Wadern},
   },
   date={2018},
   pages={Art. No. 29, 18},
  % review={\MR{3857267}},
}

\bib{BL}{article}{
   author={Becker, O.},
   author={Lubotzky, A.},
   title={Group stability and Property (T)},
   journal={J. Funct. Anal.},
   volume={278},
   date={2020},
   number={1},
   pages={108298, 20},
  % issn={0022-1236},
   %review={\MR{4027744}},
   %doi={10.1016/j.jfa.2019.108298},
}

\bib{BLT}{article}{
   author={Becker, O.},
   author={Lubotzky, A.},
   author={Thom, A.},
   title={Stability and invariant random subgroups},
   journal={Duke Math. J.},
   volume={168},
   date={2019},
   number={12},
   pages={2207--2234},
   %issn={0012-7094},
   %review={\MR{3999445}},
   %doi={10.1215/00127094-2019-0024},
}

\bib{MR306329}{article}{
   author={Boler, J.},
   author={Evans, B.},
   title={The free product of residually finite groups amalgamated along
   retracts is residually finite},
   journal={Proc. Amer. Math. Soc.},
   volume={37},
   date={1973},
   pages={50--52},
   %issn={0002-9939},
   %review={\MR{306329}},
   %doi={10.2307/2038704},
}

\bib{MR2295543}{article}{
   author={de Cornulier, Y.},
   title={Finitely presented wreath products and double coset
   decompositions},
   journal={Geom. Dedicata},
   volume={122},
   date={2006},
   pages={89--108},
  % issn={0046-5755},
   %review={\MR{2295543}},
   %doi={10.1007/s10711-006-9061-4},
}

\bib{Yves}{article}{
   author={de Cornulier, Y.},
   journal={Personal communication},
   date={2023},
}

\bib{Cu-Sm}{article}{
   author={Cutolo, G.},
   author={Smith, H.},
   title={Groups with countably many subgroups},
   journal={J. Algebra},
   volume={448},
   date={2016},
   pages={399--417},
  % issn={0021-8693},
   %review={\MR{3438315}},
   %doi={10.1016/j.jalgebra.2015.10.007},
}

\bib{El}{article}{
   author={Elek, G.},
   title={Finite graphs and amenability},
   journal={J. Funct. Anal.},
   volume={258},
   date={2012},
   number={5},
   pages={1692--1708},
   issn={0022-1236},
 %  review={\MR{2566316}},
   %doi={10.1016/j.jfa.2012.08.021},
}

\bib{El-Li}{article}{
   author={Elek, G.},
   author={Lippner, G.},
   title={Sofic equivalence relations},
   journal={J. Funct. Anal.},
   volume={263},
   date={2012},
   number={9},
   pages={2593--2614},
   %issn={0022-1236},
   %review={\MR{2566316}},
   %doi={10.1016/j.jfa.2009.10.013},
}

\bib{El-Sz}{article}{
   author={Elek, G.},
   author={Szab{\'o}, E.},
   title={Hyperlinearity, essentially free actions and $L^2$-invariants.
   The sofic property},
   journal={Math. Ann.},
   volume={332},
   date={2005},
   number={2},
   pages={421--441},
   %issn={0025-5831},
   %review={\MR{2178069 (2007i:43002)}},
   %doi={10.1007/s00208-005-0640-8},
}

\bib{El-Sz2}{article}{
   author={Elek, G.},
   author={Szab{\'o}, E.},
   title={Sofic representations of amenable groups},
   journal={Proc. Amer. Math. Soc.},
   volume={139},
   date={2011},
   number={12},
   pages={4285--4291},
   %issn={0002-9939},
   %review={\MR{2823074 (2012j:20127)}},
   %doi={10.1090/S0002-9939-2011-11222-X},
}

\bib{MR0106242}{article}{
   author={Eremin, I. I.},
   title={Groups with finite classes of conjugate abelian subgroups},
  % language={Russian},
   journal={Mat. Sb. (N.S.)},
   volume={47 (89)},
   date={1959},
   pages={45--54},
  % review={\MR{0106242}},
}

\bib{compl}{article}{
   author={Gasch\"{u}tz, W.},
   title={Zur Erweiterungstheorie der endlichen Gruppen},
   %language={German},
   journal={J. Reine Angew. Math.},
   volume={190},
   date={1952},
   pages={93--107},
   %issn={0075-4102},
   %review={\MR{51226}},
   %doi={10.1515/crll.1952.190.93},
}

\bib{Gl-Ri}{article}{
   author={Glebsky, L.},
   author={Rivera, L. M.},
   title={Almost solutions of equations in permutations},
   journal={Taiwanese J. Math.},
   volume={13},
   date={2009},
   number={2A},
   pages={493--500},
   %issn={1027-5487},
   %review={\MR{2500002 (2010a:20006)}},
}

\bib{MR1616436}{article}{
   author={Grigorchuk, R. I.},
   title={An example of a finitely presented amenable group that does not
   belong to the class EG},
  % language={Russian, with Russian summary},
   journal={Mat. Sb.},
   volume={189},
   date={1998},
   number={1},
   pages={79--100},
%   issn={0368-8666},
%   translation={
%      journal={Sb. Math.},
%      volume={189},
%      date={1998},
%      number={1-2},
%      pages={75--95},
     % issn={1064-5616},
  % },
   %review={\MR{1616436}},
   %doi={10.1070/SM1998v189n01ABEH000293},
}

\bib{MR2338855}{article}{
   author={Hadwin, D.},
   author={Shen, J.},
   title={Free orbit dimension of finite von Neumann algebras},
   journal={J. Funct. Anal.},
   volume={249},
   date={2007},
   number={1},
   pages={75--91},
  % issn={0022-1236},
   %review={\MR{2338855}},
   %doi={10.1016/j.jfa.2007.04.008},
}

\bib{Ha-El}{article}{
   author={Hayes, B.},
   author={Elayavalli, S.},
   title={Approximate homomorphisms and sofic approximations of orbit
   equivalence relations},
   journal={Ergodic Theory Dynam. Systems},
   volume={44},
   date={2024},
   number={12},
   pages={3455--3480},
   %issn={0143-3857},
   %review={\MR{4818932}},
   %doi={10.1017/etds.2024.22},
}

\bib{MR38347}{article}{
   author={Higman, G.},
   title={A finitely related group with an isomorphic proper factor group},
   journal={J. London Math. Soc.},
   volume={26},
   date={1951},
   pages={59--61},
   label={Hig51a},
   %issn={0024-6107},
   %review={\MR{38347}},
   %doi={10.1112/jlms/s1-26.1.59},
}

\bib{MR38348}{article}{
   author={Higman, G.},
   title={A finitely generated infinite simple group},
   journal={J. London Math. Soc.},
   volume={26},
   date={1951},
   pages={61--64},
    label={Hig51b},
  % issn={0024-6107},
   %review={\MR{38348}},
   %doi={10.1112/jlms/s1-26.1.61},
}

\bib{MR4134896}{article}{
   author={Ioana, A.},
   title={Stability for product groups and property ($\tau$)},
   journal={J. Funct. Anal.},
   volume={279},
   date={2020},
   number={9},
   pages={108729, 32},
  % issn={0022-1236},
   %review={\MR{4134896}},
   %doi={10.1016/j.jfa.2020.108729},
}

\bib{VFree}{article}{
   author={Lazarovich, N.},
   author={Levit, A.},
   title={Virtually free groups are stable in permutations},
   journal={Groups Geom. Dyn.},
   volume={17},
   date={2023},
   number={4},
   pages={1417--1434},
  % issn={1661-7207},
   %review={\MR{4641373}},
   %doi={10.4171/ggd/735},
}

\bib{Reid}{article}{
   author={Long, D. D.},
   author={Reid, A. W.},
   title={Subgroup separability and virtual retractions of groups},
   journal={Topology},
   volume={47},
   date={2008},
   number={3},
   pages={137--159},
   %issn={0040-9383},
   %review={\MR{2414358}},
   %doi={10.1016/j.top.2006.04.001},
}

\bib{MR0003420}{article}{
   author={Malcev, A.},
   title={On isomorphic matrix representations of infinite groups},
   %language={Russian, with English summary},
   journal={Rec. Math. [Mat. Sbornik] N.S.},
   volume={8 (50)},
   date={1940},
   pages={405--422},
  % review={\MR{0003420}},
}

\bib{Malcev}{article}{
   author={Mal{\cprime}cev, A},
   title={On homomorphisms onto finite groups},
   journal={Ivanov. Gos. Ped. Inst. U\v c. Zap. Fiz.-Mat. Fak.},
   volume={18},
   date={1956},
   pages={49--60},
  % issn={0025-5874},
   %review={\MR{72137}},
   %doi={10.1007/BF01187925},
}

\bib{MR72137}{article}{
   author={Neumann, B. H.},
   title={Groups with finite classes of conjugate subgroups},
   journal={Math. Z.},
   volume={63},
   date={1955},
   pages={76--96},
  % issn={0025-5874},
   %review={\MR{72137}},
   %doi={10.1007/BF01187925},
}

\bib{NS:pr}{article}{
   author={Newman, I.},
   author={Sohler, Ch.},
   title={Every property of hyperfinite graphs is testable [extended
   abstract]},
   conference={
      title={STOC'11---Proceedings of the 43rd ACM Symposium on Theory of
      Computing},
   },
   book={
      publisher={ACM, New York},
   },
   date={2011},
   %pages={675--684},
   %review={\MR{2932018}},
   %doi={10.1145/1993636.1993726},
}

\bib{NS}{article}{
   author={Newman, I.},
   author={Sohler, Ch.},
   title={Every property of hyperfinite graphs is testable},
   journal={SIAM J. Comput.},
   volume={42},
   date={2013},
   number={3},
   pages={1095--1112},
  % issn={0097-5397},
   %review={\MR{3066809}},
   %doi={10.1137/120890946},
}

\bib{Pa1}{article}{
   author={P{\u{a}}unescu, L.},
   title={On sofic actions and equivalence relations},
   journal={J. Funct. Anal.},
   volume={261},
   date={2011},
   pages={2461--2485},
}

\bib{Grec}{article}{
   author={Sambale, B.},
   title={On the converse of Gasch\"utz' complement theorem},
   journal={J. Group Theory},
   volume={26},
   date={2023},
   number={5},
   pages={931--949},
   %issn={1433-5883},
   %review={\MR{4634728}},
   %doi={10.1515/jgth-2022-0178},
}

\bib{MR1868545}{article}{
   author={Sapir, M.},
   author={Wise, D. T.},
   title={Ascending HNN extensions of residually finite groups can be
   non-Hopfian and can have very few finite quotients},
   journal={J. Pure Appl. Algebra},
   volume={166},
   date={2002},
   number={1-2},
   pages={191--202},
   %issn={0022-4049},
   %review={\MR{1868545}},
   %doi={10.1016/S0022-4049(01)00003-2},
}

\bib{Schramm}{article}{
   author={Schramm, O.},
   title={Hyperfinite graph limits},
   journal={Electron. Res. Announc. Math. Sci.},
   volume={15},
   date={2008},
   pages={17--23},
   %review={\MR{2372897}},
}

\bib{Wuu}{article}{
   author={Wuu, A.},
   title={Homomorphism Extension},
   date={2018},
   note={arXiv:1802.08656},
}

\bib{Gr}{article}{
   author={Zheng, T.},
   title={On rigid stabilizers and invariant random subgroups of groups of homeomorphisms},
   date={2019},
   note={arXiv:1901.04428},
}

\end{biblist}
\end{bibdiv}

\end{document}